\newtheorem{Theorem}{Theorem}[section]
\newtheorem{Proposition}[Theorem]{Proposition}
\newtheorem{Lemma}[Theorem]{Lemma}
\newtheorem{Corollary}[Theorem]{Corollary}
\theoremstyle{definition}
\newtheorem{Definition}[Theorem]{Definition}
\newtheorem{Remark}[Theorem]{Remark}
\newcommand{\bTheorem}[1]{
\begin{Theorem} \label{T#1} }
\newcommand{\eT}{\end{Theorem}}
\newcommand{\bProposition}[1]{
\begin{Proposition} \label{P#1}}
\newcommand{\eP}{\end{Proposition}}
\newcommand{\bLemma}[1]{
\begin{Lemma} \label{L#1} }
\newcommand{\eL}{\end{Lemma}}
\newcommand{\bCorollary}[1]{
\begin{Corollary} \label{C#1} }
\newcommand{\eC}{\end{Corollary}}
\newcommand{\vrh}{\vr_h}
\newcommand{\vuh}{\vu_h}
\newcommand{\vth}{\vt_h}
\newcommand{\uih}{u_{i,h}}
\newcommand{\calDh}{\mathcal{B}_h}
\newcommand{\calP}{\mathcal{P}}
\newcommand{\Du}{\mathbf{D}(\vu)}
\newcommand{\Dhuh}{\bD_h(\vuh)}
\newcommand{\Dhuhk}{\bD_h(\vuh^k)}
\newcommand{\av}[1]{ \left\{ #1 \right\}}
\newcommand{\bRemark}[1]{
\begin{Remark} \label{R#1} }
\newcommand{\eR}{\end{Remark}}
\newcommand{\bDefinition}[1]{
\begin{Definition} \label{D#1} }
\newcommand{\eD}{\end{Definition}}
\newcommand{\Nu}{\mathcal{V}_{t,x}}
\newcommand{\intSh}[1] {\int_{\sigma} #1 \ds }
\newcommand{\order}{\mathcal{O}}
\newcommand{\bfu}{\mathbf{u}}
\newcommand{\bfv}{\mathbf{v}}
\newcommand{\bfq}{\mathbf{q}}
\newcommand{\bfe}{\mathbf{e}}
\newcommand{\bfx}{\mathbf{x}}
\newcommand{\bfphi}{\boldsymbol{\phi}}
\newcommand{\bfvarphi}{\boldsymbol{\varphi}}
\newcommand{\Pim}{\Pi_\mathcal{T}}
\newcommand{\Pie}{\Pi_\mathcal{E}}
\newcommand{\Pid}{\Pi_\mathcal{E}}
\newcommand{\Piei}{\Pi_\mathcal{E}^{(i)}}
\newcommand{\sumi}{\sum_{i=1}^d}
\newcommand{\ds}{\,{\rm d}S_x}
\newcommand{\bFormula}[1]{
\begin{equation} \label{#1}}
\newcommand{\eF}{\end{equation}}
\newcommand{\grid}{\mathcal{T}}
\newcommand{\dgrid}{\mathcal{G}}
\newcommand{\TS}{\Delta t}
\newcommand{\Divh}{{\rm div}_h}
\newcommand{\Gradh}{\nabla_h}
\newcommand{\Gradedge}{\nabla_\faces}
\newcommand{\Divmesh}{{\rm div}_{\mathcal{T}}}
\newcommand{\pdedge}{\eth_ \faces}
\newcommand{\pdedgei}{\eth_ \faces^{(i)}}
\newcommand{\pdmesh}{\eth_\grid}
\newcommand{\pdmeshi}{\eth_\grid}
\newcommand{\co}[2]{{\rm co}\{ #1 , #2 \}}
\newcommand{\Ov}[1]{\overline{#1}}
\newcommand{\aleq}{\stackrel{<}{\sim}}
\newcommand{\ageq}{\stackrel{>}{\sim}}
\newcommand{\vr}{\varrho}
\newcommand{\tvr}{\tilde \vr}
\newcommand{\tvu}{{\tilde \vu}}
\newcommand{\tvt}{\tilde \vt}
\newcommand{\vt}{\vartheta}
\newcommand{\vu}{\vc{u}}
\newcommand{\vn}{\vc{n}}
\newcommand{\vc}[1]{{\bf #1}}
\newcommand{\Div}{{\rm div}_x}
\newcommand{\Grad}{\nabla_x}
\newcommand{\dx}{\,{\rm d} {x}}
\newcommand{\dt}{\,{\rm d} t }
\newcommand{\jump}[1]{\left\llbracket#1\right\rrbracket}
\newcommand{\abs}[1]{| #1|}
\newcommand{\biggabs}[1]{\bigg| #1\bigg|}
\newcommand{\norm}[1]{\left\lVert#1\right\rVert}
\newcommand{\vU}{\vc{U}}
\newcommand{\dxdt}{\dx \ \dt}
\newcommand{\intO}[1]{\int_{\Omega} #1 \dx}
\newcommand{\intTO}[1]{\int_0^T \int_{\Omega} #1 \ \dxdt}
\newcommand{\vv}{\vc{v}}
\newcommand{\I}{\mathbb{I}}
\def\softd{{\leavevmode\setbox1=\hbox{d}%
          \hbox to 1.05\wd1{d\kern-0.4ex{\char039}\hss}}}
\definecolor{Cgrey}{rgb}{0.85,0.85,0.85}
\definecolor{Cblue}{rgb}{0.50,0.85,0.85}
\definecolor{Cred}{rgb}{1,0,0}
\definecolor{fancy}{rgb}{0.10,0.85,0.10}
\definecolor{forestgreen}{rgb}{0.13, 0.55, 0.13}
\newcommand\Cbox[2]{%
    \newbox\contentbox%
    \newbox\bkgdbox%
    \setbox\contentbox\hbox to \hsize{%
        \vtop{
            \kern\columnsep
            \hbox to \hsize{%
                \kern\columnsep%
                \advance\hsize by -2\columnsep%
                \setlength{\textwidth}{\hsize}%
                \vbox{
                    \parskip=\baselineskip
                    \parindent=0bp
                    #2
                }%
                \kern\columnsep%
            }%
            \kern\columnsep%
        }%
    }%
    \setbox\bkgdbox\vbox{
        \color{#1}
        \hrule width  \wd\contentbox %
               height \ht\contentbox %
               depth  \dp\contentbox
        \color{black}
    }%
    \wd\bkgdbox=0bp%
    \vbox{\hbox to \hsize{\box\bkgdbox\box\contentbox}}%
    \vskip\baselineskip%
}
\date{}
\newcommand{\pd}{\partial}
\newcommand{\eps}{\varepsilon}
\newcommand{\faces}{\mathcal{E}}
\newcommand{\facesi}{\faces _i}
\newcommand{\facesK}{\faces(K)}
\newcommand{\facesKi}{\faces _i(K)}
\newcommand{\facesint}{\faces}
\newcommand{\facesinti}{\facesi}
\newcommand{\bQh}{ Q_h}
\newcommand{\bWh}{ W_h}
\newcommand{\bD}{\mathbf D}
\newcommand{\muh}{h^\eps}
\begin{document}

\title{ On the convergence of a finite volume method\\ for the Navier-Stokes-Fourier system}
\author{Eduard Feireisl$^{\clubsuit,}$\thanks{E.F.,  H.M. and B.S. have received funding from
the Czech Sciences Foundation (GA\v CR), Grant Agreement
18--05974S.
The Mathematical Institute of the Czech Academy of Sciences is supported by RVO:67985840.
\newline
\hspace*{1em} $^\spadesuit$M.L. has been funded by the Deutsche Forschungsgemeinschaft (DFG, German Research Foundation) - Project number 233630050 - TRR 146 as well as by  TRR 165 Waves to Weather.}
\and M\'aria Luk\'a\v{c}ov\'a-Medvi\softd ov\'a $^{\spadesuit}$
\and Hana Mizerov\'a $^{*, \dagger}$
\and Bangwei She $^{*}$
}

\date{\today}

\maketitle

\centerline{$^*$ Institute of Mathematics of the Academy of Sciences of the Czech Republic}
\centerline{\v Zitn\' a 25, CZ-115 67 Praha 1, Czech Republic}
\centerline{feireisl@math.cas.cz, mizerova@math.cas.cz, she@math.cas.cz}

\bigskip
\centerline{$^\clubsuit$ Technische Universit\"at Berlin}
\centerline{Stra\ss e des 17.~Juni, Berlin, Germany}

\bigskip
\centerline{$^\spadesuit$ Institute of Mathematics, Johannes Gutenberg-University Mainz}
\centerline{Staudingerweg 9, 55 128 Mainz, Germany}
\centerline{lukacova@uni-mainz.de}

\bigskip
\centerline{$^\dagger$ Department of Mathematical Analysis and Numerical Mathematics}
\centerline{Faculty of Mathematics, Physics and Informatics of the Comenius University}

\centerline{Mlynsk\' a dolina, 842 48 Bratislava, Slovakia}

\begin{abstract}

We study convergence of a finite volume scheme for the Navier--Stokes--Fourier system describing the motion of  compressible
viscous and heat conducting fluids. The numerical flux  uses upwinding with an additional numerical diffusion of order $\mathcal O(h^{ \eps+1})$, $0<\varepsilon<1$.   The  approximate solutions are piecewise constant functions with respect to the underlying mesh. 
We show that any uniformly bounded sequence of numerical solutions converges unconditionally to the solution of the Navier--Stokes--Fourier system. In particular, the existence of the solution to the Navier--Stokes--Fourier system is not a priori assumed.

\end{abstract}

{\bf Keywords:} compressible Navier--Stokes--Fourier system, finite volume method, upwinding, convergence, Young measures, dissipative measure--valued solutions, weak--strong uniqueness

\tableofcontents

\section{Introduction}
 The time evolution of viscous compressible and heat conducting fluids is governed by the conservation of mass,  momentum and energy. Altogether these conservation
laws yield the well-known Navier--Stokes--Fourier system
\begin{subequations}\label{ns_eqs}
\begin{equation}
\pd_t \vr  + \Div (\vr \vu ) = 0,
\end{equation}
\begin{equation}
\pd_t (\vr \vu)  + \Div (\vr \vu \otimes \vu  ) + \Grad p= \Div \mathbb{S}(\Du) ,
\end{equation}
\begin{equation}
\pd_t (\vr e)  +  \Div ( \vr e\vu) - \Div (\kappa \Grad \vt) =2\mu |\bD(\vu)|^2  + \lambda |\Div \vu|^2 -p \Div \vu ,
\end{equation}
\end{subequations}
where $\vr, \vu, \vt, p, e$ are the density, velocity,   temperature, pressure and internal energy, respectively.
The pressure $p$ satisfies the perfect gas law
\[ p = \vr  \vt, \] and the internal energy is  $$e=c_v\vt,$$
where $c_v > 0 $ is the specific heat at constant volume. The constant $\kappa >0$ denotes the heat conductivity coefficient. 
 Further, we have denoted by $$\bD(\vu)=\frac{\Grad \vu + \Grad^T \vu}{2}$$ the symmetric velocity gradient and by
$$\mathbb S(\Du)= 2\mu \bD(\vu)  + \lambda  \Div \vu \I $$ the viscous stress tensor with the viscosity coefficients $\mu>0$ and $ \lambda \geq 0$. 
System \eqref{ns_eqs} is solved in the time--space  cylinder $(0,T)\times\Omega$.  We prescribe the periodic boundary condition, which means  $\Omega \subset \mathbb R^d, \ d=2,3,$ is assumed to be a flat torus. 
To close the system we impose the initial conditions
\begin{equation*}\label{initial_data}
\vr (0) =\vr _0 , \; \vu (0) =\vu _0 ,\; \vt (0) =\vt _0 ,\; \text{ with } \vr_0 >0\text{ and } \vt_0>0.
\end{equation*}

System \eqref{ns_eqs} has numerous everyday applications, e.g., in aerodynamics, hydrodynamics, engineering or even in medicine.  Therefore its numerical approximations have been widely studied in the past decades. Let us mention a few well-established and practical schemes, e.g.,
\cite{BenArtzi_GRP, Cockburn_DG, DoFei_DG, Godunov, Lukacova_FVEG, Shen_CESE, Shu_ENO, Toro, Xu_BGK}. Despite of such  variety of efficient numerical schemes, their convergence analysis  is still open in general.  Though there are some convergence (and even error estimate) results for numerical methods for the isentropic Navier-Stokes equations, see, e.g., \cite{GalHerMalNov, GallouetMAC, HosekShe_MAC, jovanovic, Karper} or \cite{FHMN, FL_18,FVNS_FLMS}, 
the convergence analysis of the full Navier--Stokes--Fourier system is considerably more involved and  much less results are available in the literature.
For a mixed finite element--finite volume method based on the
Crouzeix--Raviart finite elements Feireisl, Karper and Novotn\'y \cite{FKN2016} proved the convergence to a weak solution for a rather specific state equation $p = a \rho^\gamma + b \rho + \rho \theta$, $a,b >0$ and $\gamma > 3.$  It is to be pointed out that the generalization of the result obtained in \cite{FKN2016} to other schemes is still open, cf.~also \cite{HS_NSF}. On the other hand, in our recent works \cite{FL_18, FLM18, FLM18_brenner, FVNS_FLMS} we have proposed a new, rather general way for the convergence analysis via the concept of
dissipative measure--valued (DMV) solutions.

Our approach bears some similarities with the recent works of Fjordholm et al.~\cite{FjKaMiTa, FjMiTa2, FjMiTa1}, who studied the convergence of
entropy stable 
finite volume schemes to a measure--valued solution of the Euler equations. The main difference  in using the concept of DMV  solutions lies in the fact that we relax the energy conservation asking only  the global energy to dissipate over  time. Similarly to Fjordholm et al. we also require that the entropy inequality holds, cf.~Definition~\ref{def_DMV}. 

The main goal of this paper is to demonstrate that the strategy proposed in \cite{FL_18, FVNS_FLMS} can be extended to obtain the convergence for the full Navier--Stokes--Fourier system \eqref{ns_eqs}.
To solve  the latter  numerically we apply a finite volume scheme  with the numerical flux function based on upwinding to get a piecewise constant approximation of all unknown quantities.  Under a realistic assumption that the numerical solutions have bounded temperature and density, we can prove the consistency of the finite volume scheme. This fact together with some suitable {\sl a  priori} estimates implies that
the sequence of numerical solutions generates, up to a subsequence, a DMV solution. Note, that in contrast to the isentropic Navier--Stokes equations, we need to control also the gradients of the velocity and temperature, since they are
now included in the  support of the corresponding Young measure, cf.~\cite{stability_nsf}. 
Furthermore, using the DMV-strong uniqueness principle for the solution of the Navier--Stokes--Fourier system, cf.~\cite{stability_nsf}, we get the strong convergence of the piecewise constant solutions to the strong (classical) solution on its lifespan, see
Theorem~\ref{thm_dmvs}.  For any uniformly bounded sequence of numerical solutions  we also obtain the global in time convergence
to the strong (classical) solution of the Navier--Stokes--Fourier system (\ref{ns_eqs}) without a priori assuming the existence of its solution, see Theorem~\ref{thm_convergence}.
Here \emph{strong}  means solutions in the standard energy spaces used by Valli and Zajackowski  \cite{Valli}. In particular, as shown in \cite{Valli} these are \emph{classical} solutions in the sense that all necessary derivatives are continuous.

The rest of the paper is organized as follows. In Section 2 we introduce the necessary notations and the numerical scheme. In Section 3 we show that the  discrete solutions satisfy the global energy dissipation  and the entropy inequality. The consistency formulation of the scheme is proved in Section~4.
We present the main results on the convergence of our finite volume scheme in  Section~5. 

\section{Numerical scheme}\label{sec_Notations}
In this section we collect the necessary  apparatus of the numerical analysis and introduce the finite volume method for the
Navier--Stokes-Fourier system (\ref{ns_eqs}).

\subsection{Space discretization}

{\bf Mesh.}  Let $\grid$ be a uniform quadrilateral mesh
 such that
\[
 \Omega = \bigcup_{K \in \grid} K,
\]
where $K$ is a square $(d=2)$ or a cube $(d=3)$.  For any $K\in\grid$ we denote by $\bfx_K$  its center of mass and by $|K|=h^d$  its volume.
Let $\faces$ be the set of all faces, and $\facesi, \; i = 1,\ldots, d$, be the set of all faces that are orthogonal to the unit vector $\bfe_{i}$ of the $i^{th}$ canonical direction. Moreover, we write $\facesK$ as the set of all faces of  an element $K$ and $\facesKi = \facesK \cap \facesi$.
For any $\sigma$ being the common face of elements $K$ and $L$, we write $\sigma= K|L$. We further write $\sigma =\overrightarrow{K|L}$ if $\bfx_L = \bfx_K + h\bfe_i$ for  any $i = 1,\ldots,d $.
By $\bfx_\sigma$  we denote the center of mass of a generic face $\sigma$ and  by $|\sigma| = h^{d-1}$  its Lebesque measure.

\noindent {\bf Function space.}
 The symbol $Q_h$ stands for the set of piecewise constant functions on primary grid $\grid$.
 We approximate the  density, velocity and temperature  by discrete functions $\vrh$,  $\vuh,$ $\vth\in Q_h$, respectively.  Analogously, $s_h=s(\vrh,\vth)$ stands for a piecewise constant approximation of a function $s=s(\vr,\vt)$ with respect to $\grid.$ Note that hereafter $\vv_h\in Q_h$ means that every component of a vector--valued function $\vv_h$ belongs to the set $Q_h.$

The standard projection operator associated to $Q_h$ reads
\begin{equation*}
\Pim: \, L^1(\Omega) \rightarrow Q_h. \quad
\Pim  \phi  = \sum_{K \in \grid} 1_{K} \frac{1}{|K|} \int_K \phi \dx .
\end{equation*}
For any $v_h \in Q_h$ we have
\[ \intO{v_h} = \sum_{K\in\grid}|K| v_K, \quad v_K = v_h|_K.
\]
Further, we use the following notations for the average and jump operators
\[
\Ov{v}(x) = \frac{v^{\rm in}(x) + v^{\rm out}(x) }{2},\
\jump{ v }  = v^{\rm out}(x) - v^{\rm in}(x), \ \mbox{ where } \
v^{\rm out}(x) = \lim_{\delta \to 0+} v(x + \delta \vc{n}),\
v^{\rm in}(x) = \lim_{\delta \to 0+} v(x - \delta \vc{n}),\
\]
whenever $x \in \sigma \in \facesint$.

\noindent {\bf Discrete operators.} For piecewise constant functions  we define the discrete gradient and divergence operators in the following way
\begin{equation*}
\begin{aligned}
\Gradh r_h(\bfx) & =  \sum_{K \in \grid} \left( \Gradh r_h\right)_K  1_K, \quad&
\left( \Gradh r_h\right)_K  &=  \frac{|\sigma|}{|K|} \sum_{\sigma \in \facesK} \Ov{r_h} \vc{n} , \quad
\\
\Divh \bfv_h(\bfx) &= \left( \Divh \bfv_h\right)_K 1_K, \quad&
\left( \Divh \bfv_h\right)_K &= \left( \Gradh \cdot  \bfv_h\right)_K  =  \frac{|\sigma|}{|K|} \sum_{\sigma \in \facesK} \Ov{\bfv_h} \cdot \vc{n} ,
\\
\Gradh \bfv_h &= \left( \Gradh v_{1,h}, \ldots,   \Gradh v_{d,h}\right)^T , \quad&
\bD_h (\bfv_h) &= \left(\Gradh \bfv_h   +\Gradh^T \bfv_h \right)/2,
\end{aligned}
\end{equation*}
for any $r_h, \; \bfv_h \in Q_h$. It is worth mentioning that due to the fact that $ \int_{\pd K}  \vc{n}\ds =0 $, we have
\[ \int_{\pd K} \Ov{r_h} \vc{n}\ds = \frac12 \int_{\pd K} \jump{r_h} \vc{n}\ds  .
\]
The discrete Laplace operator can be defined analogously
\begin{equation*}\label{dis_laplace}
\Delta_h r_h(\bfx) =  \sum_{K \in \grid} \left(\Delta_h r_h\right)_K  1_K, \quad
\left(\Delta_h r_h\right)_K =  \frac{|\sigma|}{|K|} \sum_{\sigma \in \facesK} \frac{ \jump{r_h} }{h}, \qquad r_h \in Q_h.
\end{equation*}

In what follows we will also work with functions  evaluated at the cell faces.  Therefore it is convenient to introduce a dual grid
associated to  faces $\sigma$ and the corresponding discrete  function space.

\noindent {\bf Dual grid.} For any $\sigma=K|L\in \facesint$, we define a dual  cell  $D_\sigma := D_{\sigma,K} \cup D_{\sigma,L}$, where $D_{\sigma,K}$ (resp.~$D_{\sigma,L}$) is half of  an element $K$ (resp.~$L$), see Figure~\ref{fig:mesh} for an example of such a cell in two dimensions.   We denote the set of all dual cells by $\dgrid$.
Furthermore, we define $\dgrid_i = \{ D_\sigma \}_{\sigma \in \facesi}, i = 1,\ldots, d$.
\begin{figure}[!h]
\centering
\begin{tikzpicture}[scale=1.0]
\draw[-,very thick](0,-2)--(5,-2)--(5,2)--(0,2)--(0,-2)--(-5,-2)--(-5,2)--(0,2);
\draw[-,very thick, green=90!, pattern=north west lines, pattern color=green!30] (0,-2)--(2.5,-2)--(2.5,2)--(0,2)--(0,-2);
\draw[-,very thick, blue=90!, pattern= north east  lines, pattern color=blue!30] (0,-2)--(0,2)--(-2.5,2)--(-2.5,-2)--(0,-2);

\path node at (-3.5,0) { $K$};
\path node at (3.5,0)  { $L$};
\path node at (-2.5,0) {$ \bullet$};
\path node at (-2.8,-0.3) {$ \bfx_K$};
\path node at (2.5,0) {$\bullet$};
\path node at (2.7,-0.3) {$ \bfx_L$};
\path node at (0,0) {$\bullet$};
\path node at (0.3,-0.3) {$ \bfx_\sigma$};

\path (-0.4,0.8) node[rotate=90] { $\sigma=\overrightarrow{K|L}$};
 \path (-1.5,1.4) node[] { $D_{\sigma,K}$};
 \path (1.5,1.4) node[] { $D_{\sigma,L}$};
 \end{tikzpicture}
\caption{Dual grid}
 \label{fig:mesh}
\end{figure}
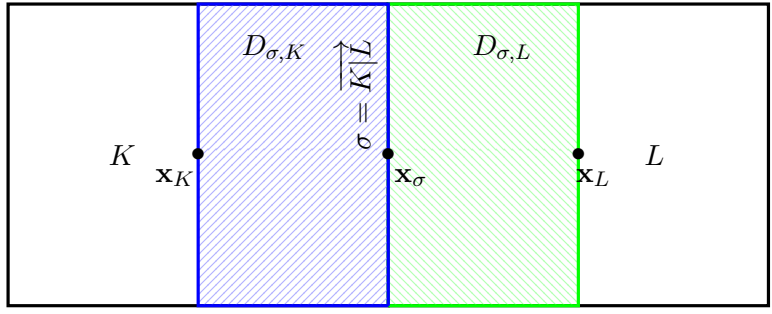
Now we are able to define $W_h^{(i)},$ $i = 1,\ldots,d,$ as the space of piecewise constant functions on the dual grid $\dgrid _i$. By $\bfq=(q_{1}, \ldots, q_{d}) \in \bWh:=\big( W_h^{(1)}, \ldots,  W_h^{(d)} \big)$ we mean that $q_{i} \in W_h^{(i)}$, for all $i=1, \ldots, d$.

\noindent Accordingly, the associated projection of the functional spaces  $W_h $ is given by
\begin{equation*}
\Pid: \quad L^1(\Omega) \rightarrow W_h, \quad
\Pid  = (\Pid^{(1)}, \ldots, \Pid^{(d)}), \quad \Pid^{(i)} \phi
= \sum_{\sigma \in \facesi}  \frac{1_{D_\sigma}}{|D_\sigma|} \int_{D_\sigma} \phi \dx .
\end{equation*}

\noindent For any $r_h\in Q_h$  and $\bfq_h =(q_{1,h}, \ldots, q_{d,h}) \in W_h$ we define the following standard difference operators
\begin{equation*}
\begin{aligned}
 \pdedgei r_h(\bfx) & = \sum_{\sigma\in\faces} 1_{D_\sigma} \left(\pdedgei r_h \right)_{\sigma}, \quad &
\left(\pdedgei r_h\right) _{\sigma} &=\frac{r_h|_L - r_h|_K}{ h} \; \text{ for any } \; \sigma =\overrightarrow{K|L} \in \facesinti,\\
 \pdmeshi q_{ i,h}(\bfx)&  = \sum_{K \in \grid} \left(\pdmeshi q_{i,h}\right)_{K}1_K, \quad &
\left(\pdmeshi q_{i,h} \right)_{K} &=\frac{ q_{i,h}|_{\sigma'} - q_{i,h}|_{\sigma}}{h}  \; \mbox{ for all } \; \sigma, \sigma' \in \facesKi \mbox{ and } \bfx_{\sigma'} =\bfx_\sigma +h \bfe_i .
\end{aligned}
\end{equation*}
With the above notations, we further define
\begin{equation}\label{grad_edge}
 \begin{split}
& \Gradedge r_h = \left(\pdedge^{(1)} ,\ldots, \pdedge^{(d)} \right) r_h , \quad
 \Divmesh \bfq_h = \sumi \pdmeshi q_{i,h} .
\end{split}
\end{equation}
It is easy to observe that
\begin{equation} \label{avg_dif_order}
\pdmeshi \Piei r_h = \Pim \pdedgei r_h, \quad \Gradh r_h = \pdmesh \Pie r_h = \Pim \Gradedge r_h , \quad \Delta_h \vth = \Divmesh \Gradedge \vth .
\end{equation}

\noindent {\bf Integration by parts.}
 Let us start with recalling the following algebraic identity
\begin{equation*}\label{avg_diff}
\Ov{u_h v_h} - \Ov{u_h}\ \Ov{v_h} =\frac14 \jump{u_h} \jump{v_h}
\end{equation*}
together with the product rule
\begin{equation}\label{product_rule}
\jump{ u_h v_h }  = \Ov{u_h} \jump{v_h}  + \jump{u_h}  \Ov{v_h}\; ,
\end{equation}
which are valid for any $u_h, v_h\, \in Q_h.$
A direct application of the product rule \eqref{product_rule} further implies
the following lemma.
\begin{Lemma}\cite[Lemma 2.2]{FVNS_FLMS}  For  any  $ r_h,$ $ \bfv_h \in \bQh$ it holds that
\begin{equation}\label{basic_eq2}
 \sum_{ \sigma \in \facesint } \intSh{ \left( \Ov{r_h} \jump{ \bfv_h } +\Ov{\bfv_h }  \jump{r_h  }   \right) \cdot \vc{n} }
=0.
\end{equation}
\end{Lemma}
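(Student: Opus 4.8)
The plan is to expand the two jump terms using the product rule \eqref{product_rule} and then recognize the resulting sum as a telescoping (discrete divergence-theorem) identity over the periodic mesh. Concretely, by \eqref{product_rule} we have $\jump{r_h \bfv_h} = \Ov{r_h}\jump{\bfv_h} + \jump{r_h}\Ov{\bfv_h}$ componentwise, so the integrand $\bigl(\Ov{r_h}\jump{\bfv_h} + \Ov{\bfv_h}\jump{r_h}\bigr)\cdot\vc{n}$ equals $\jump{r_h\bfv_h}\cdot\vc{n} = \jump{r_h \bfv_h \cdot \vc{n}}$, where I use that $\vc{n}$ is fixed on each face $\sigma$. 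Hence the left-hand side of \eqref{basic_eq2} collapses to $\sum_{\sigma\in\facesint}\int_\sigma \jump{r_h\bfv_h\cdot\vc{n}}\dS_x$.

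Next I would rewrite this face sum as a sum over elements and their boundaries. For each interior face $\sigma = \overrightarrow{K|L}$, orienting $\vc{n}$ consistently (from $K$ to $L$, say), the jump $\jump{r_h\bfv_h\cdot\vc{n}} = (r_h\bfv_h)|_L\cdot\vc{n} - (r_h\bfv_h)|_K\cdot\vc{n}$ contributes $-(r_h\bfv_h)|_K\cdot\vc{n}$ to the flux out of $K$ through $\sigma$ and $+(r_h\bfv_h)|_L\cdot\vc{n}$, i.e.\ $-(r_h\bfv_h)|_L\cdot\vc{n}_L$ with $\vc{n}_L = -\vc{n}$ the outer normal of $L$, to the flux out of $L$. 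Summing over all faces and reorganizing, one obtains $\sum_{K\in\grid}\int_{\pd K}(r_h\bfv_h)|_K\cdot\vc{n}_K\dS_x$ up to sign, and because $r_h\bfv_h$ is constant on each $K$ while $\int_{\pd K}\vc{n}_K\dS_x = 0$, every element contribution vanishes. Since $\Omega$ is a flat torus there is no exterior boundary and no boundary terms survive, so the total is $0$. This is precisely the content of \cite[Lemma 2.2]{FVNS_FLMS}, which I may invoke directly.

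The only slightly delicate point — and the step I would treat most carefully — is the bookkeeping of orientations: the average and jump operators are defined relative to a chosen normal $\vc{n}$ on each face, and one must check that the contributions of a face $\sigma = K|L$ to the "outflow" balances of $K$ and of $L$ are genuinely opposite in sign, so that the face sum really does reassemble into $\sum_K \int_{\pd K}(\,\cdot\,)\cdot\vc{n}_K$. Once that is fixed, the vanishing is immediate from $r_h|_K \bfv_h|_K$ being constant per cell together with $\int_{\pd K}\vc{n}_K\dS_x = 0$ (equivalently, the discrete analogue $\sum_{\sigma\in\facesK}|\sigma|\vc{n} = 0$ for a Cartesian cell). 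No regularity or analytic input is needed; this is a purely algebraic identity for piecewise constant functions on a periodic Cartesian mesh.
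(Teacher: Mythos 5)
Your proposal is correct and follows the same route the paper indicates: the product rule \eqref{product_rule} collapses the integrand to $\jump{r_h \bfv_h}\cdot\vc{n}$, and the resulting face sum vanishes by regrouping it as $-\sum_{K}(r_h\bfv_h)_K\cdot\int_{\pd K}\vc{n}\ds$ over the periodic mesh, using $\int_{\pd K}\vc{n}\ds=0$. Your attention to the sign bookkeeping when reassembling the face sum into per-element boundary integrals is exactly the right point to be careful about, and your treatment of it is sound.
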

\noindent Indeed, \eqref{basic_eq2} indicates the Grad--Div duality for any $r_h , \bfv_h \in Q_h$ , i.e.,
\begin{equation*}
\begin{split}
&\intO{\Gradh r_h \cdot \bfv_h} = \sum_K \bfv_K \cdot \int_{\pd K} \Ov{r_h}  \vc{n}  \ds
= \sum_{K \in \grid} \bfv_K \cdot \int_{\pd K} \frac{\jump{r_h}}{2}  \vc{n} \ds
= \sum_{\sigma \in \faces} \intSh{ \Ov{\bfv_h} \cdot  (\jump{r_h} \vc{n}) }
\\&= - \sum_{\sigma \in \faces} \intSh{ \jump{\bfv_h} \cdot  (\Ov{r_h} \vc{n}) }
=  - \sum_{K \in \grid} r_K  \int_{\pd K} \frac{\jump{\bfv_h}}{2}  \cdot\vc{n} \ds
=  - \sum_{K \in \grid} r_K  \int_{\pd K} \Ov{\bfv_h} \cdot  \vc{n} \ds
\\&= - \intO{r_h \Divh \bfv_h} .
\end{split}
\end{equation*}
It is also easy to observe the following discrete integration by parts formulae for all $r_h, \phi_h \in Q_h$ and $ \bfq_h \in W_h$
\begin{subequations}\label{int_by_part}
\begin{equation*}\label{int_by_part_las}
\intO{\Delta_h r_h \phi_h}
=-\intO{ \Gradedge r_h \cdot \Gradedge \phi_h }= \intO{ r_h \Delta_h \phi_h},
\end{equation*}
\begin{equation*}\label{int_by_part_gradm}
\intO{q_{i,h} \pdedgei r_h} = - \intO{  r_h \pdmeshi q_{i,h}},\;\mbox{ for all }\; i = 1,\ldots,d.
\end{equation*}
\end{subequations}


\noindent {\bf Useful estimates.}
Next, we list some basic inequalities used  in the numerical analysis.
We assume the reader is fairly familiar with this matter, for which we refer to the monograph \cite{EyGaHe}, and the article  \cite{GallouetMAC}. If
$\phi \in C^1(\Omega)$, $h \in (0, h_0), $ $h_0 \ll 1$, we have
\begin{equation} \label{n4c}
  \Big|  \jump{ \Pim  \phi  }   \Big|_{\sigma} \aleq h \| \phi \|_{C^1},  \mbox{ for any } x \in \sigma \in \facesint, \;
\norm{\phi - \Pim  \phi }_{L^p(\Omega)} \aleq h \| \phi \|_{C^1}, \;
\norm{\Pim \phi - \Pie \Pim  \phi }_{L^p(\Omega)} \aleq h \| \phi \|_{C^1}.
\end{equation}
Here and hereafter we  denote $A \aleq B$ if $A \leq cB$ for a positive constant $c$ which is independent of  the discretization parameter $h.$
Furthermore, if $\phi \in C^2(\Omega)$  we have for all $1< p \leq \infty$, $h \in (0, h_0), $ $h_0 \ll 1$
\begin{equation} \label{n4c2}
\begin{aligned}
\norm{ \Grad \phi - \Gradedge \big(\Pim \phi\big)  }_{L^p(\Omega)}  \aleq h, \quad
\norm{ \Grad \phi - \Gradh \big(\Pim \phi\big)  }_{L^p(\Omega)}  \aleq h, \quad
\norm{ \Div \phi - \Divh  (\Pim \phi )   }_{L^p(\Omega)}  \lesssim h.
\end{aligned}
\end{equation}

\paragraph{Diffusive upwind flux.}
For a given a velocity  $\vu_h \in \bQh$ and  a quantity $r_h \in Q_h$
the upwind numerical flux is defined at each face  $\sigma \in \facesint$ as

\begin{align*}\label{Up}
Up [r_h, \vu_h]   =r_h^{\rm up} \vu_h\cdot \vc{n}
=r_h^{\rm in} [\Ov{\vu_h} \cdot \vc{n}]^+ + r_h^{\rm out} [\Ov{\vu_h} \cdot \vc{n}]^-
= \Ov{r}_h \ \Ov{\vu_h} \cdot \vc{n} - \frac{1}{2} |\Ov{\vu_h} \cdot \vc{n}| \jump{r_h},
\end{align*}
where
\begin{equation*}
[f]^{\pm} := \frac{f\pm |f|}{2} \quad \mbox{and} \quad
r^{\rm up} :=
\begin{cases}
 r^{\rm in} & \mbox{if} \ \Ov{\vu} \cdot \vc{n} \geq 0, \\
r^{\rm out} & \mbox{if} \ \Ov{\vu} \cdot \vc{n} < 0.
\end{cases}
\end{equation*}
Now, we can define a numerical flux function
\begin{equation}\label{num_flux}
F_h (r_h,\vu_h)
={Up}[r_h, \vu_h] - \muh \jump{ r_h }
,\quad 0< \eps <1.
\end{equation}
 Let us point out that the $\muh-$term introduced in the numerical flux actually acts as an artificial diffusion term  of order $\mathcal{O}(h^{\eps+1})$ in our finite volume scheme \eqref{scheme_ns_fv} defined below.  Indeed,
\[ \sum_{\sigma\in\facesK}   \frac{|\sigma|}{|K|} \muh\jump{r_h}  =  h^{\eps+1} (\Delta_h r_h)_K
\]
for $r_h \in \{\vrh,  \vrh\vuh, \vrh \vth\}$. 
Note that the vector--valued flux function $\mathbf{F}_h  (r_h,u_h)$ that is used in the momentum equation with $r_h = \vrh \vuh$ is defined componentwisely.

\subsection{Time discretization}
For a given time step $\TS \approx h>0$
we denote the approximation of a function $v_h$ at time $t^k= k\TS$ by $v_h^k$ for $k=1,\ldots,N_T(=T/\TS)$.  The time derivative is approximated by the backward finite difference
\[
  D_t v_h^k = \frac{v_h^k- v_h^{k-1}}{\TS},\ \mbox{ for } k=1,2,\ldots, N_T.
\]
Furthermore, we introduce  the functions $(\vrh,\vuh,\vth),$ piecewise constant in time, which are  given by
\begin{equation*}
\begin{aligned}
&\vrh(t, \cdot) =\vrh^0 \mbox{ for }  t \in [0,\TS),\quad  \vrh(t,\cdot)=\vrh^k \mbox{ for } t\in [k\TS,(k+1)\TS),\quad k=1,2,\ldots,N_T,\\
&\vuh(t,\cdot) =\vuh^0 \mbox{ for }  t \in [0,\TS),\quad  \vuh(t,\cdot)=\vuh^k \mbox{ for } t\in [k\TS,(k+1)\TS),\quad k=1,2,\ldots,N_T,\\
&\vth(t,\cdot) =\vth^0 \mbox{ for }  t \in [0,\TS),\quad  \vth(t,\cdot)=\vth^k \mbox{ for } t\in [k\TS,(k+1)\TS),\quad k=1,2,\ldots,N_T,\\
 \mbox{ and } & p_h(t)=p(\vrh(t)),  \quad s_h(t)= s(\vrh(t), \vth(t)).
\end{aligned}
\end{equation*}
The discrete time derivative then reads
\[
 D_t v_h = \frac{v_h (t,\cdot) - v_h(t - \Delta t,\cdot)}{\TS} .
\]

\subsection{Numerical method for the  Navier--Stokes--Fourier system}
We  are now ready to propose the following finite volume scheme for the compressible Navier--Stokes-Fourier system \eqref{ns_eqs}.

\begin{Definition}[Finite volume scheme]
  Given the initial values  $(\vrh^0,\vuh^0, \vth^0) =(\Pim\vr_0, \Pim\vu_0, \Pim \vt_0)$, we  seek a solution
$\{(\vrh^k,\vuh^k, \vth^k)\}_{k=1}^{N_T} \in Q_h \times Q_h \times Q_h$ satisfying, for all $K\in\grid,$
\begin{subequations}\label{scheme_ns_fv}
\begin{align}
D_t \vr^k _K &+ \sum_{\sigma \in \facesK} \frac{|\sigma|}{|K|} F_h (\vrh^k,\vuh^k) =0 ,\label{scheme_ns_den_fv} \\
D_t (\vrh^k \vuh^k)_K
&+ \sum_{\sigma \in \facesK} \frac{|\sigma|}{|K|}  {\bf F}_h  (\vrh^k \vuh^k,\vuh^k)
+ \Gradh p_h^k  =   2 \mu (\Divh \Dhuhk)_K+\lambda \Gradh (\Divh \vuh^k), \label{scheme_ns_mom_fv}\\
c_v D_t (\vrh^k \vth^k)_K  &+ c_v \sum_{ \sigma \in \pd K } \frac{|\sigma|}{|K|} F_h (\vrh^k \vth^k, \vuh^k)
 -  \kappa \Delta_h \vth^k
 = 2 \mu |\Dhuhk|_K^2 + \lambda |\Divh \vuh^k|_K^2
 -p_K^k (\Divh \vuh^k)_K . \label{scheme_ns_ene_fv}
\end{align}
\end{subequations}
For convenience of analysis we rewrite the above finite volume scheme into a weak formulation.
\end{Definition}
\begin{Definition}[Weak formulation]
 The finite volume scheme \eqref{scheme_ns_fv} possesses an equivalent formulation  
\begin{subequations}\label{scheme_ns}
\begin{align}
\intO{ D_t \vrh^k \phi_h } &- \sum_{ \sigma \in \facesint } \intSh{  F_h (\vrh^k,\vuh^k)
\jump{\phi_h}   } = 0, \quad \mbox{for all}\ \phi_h \in Q_h,\label{scheme_ns_den}\\
\intO{ D_t  (\vrh^k \vuh^k) \cdot \bfphi_h } &- \sum_{ \sigma \in \facesint } \intSh{ {\bf F}_h  (\vrh^k \vuh^k,\vuh^k)
\cdot \jump{\bfphi_h}   }- \intO{ p_h^k \Divh \bfphi_h } \nonumber \\
&= - 2 \mu  \intO{ \Dhuhk  : \bD_h (\bfphi_h) }
- \lambda  \intO{\Divh   \vuh^k  \; \Divh \bfphi_h },
\quad \mbox{for all } \bfphi_h \in \bQh. \label{scheme_ns_mom} \\
c_v\intO{ D_t (\vrh^k \vth^k) \phi_h } &- c_v\sum_{ \sigma \in \facesint } \intSh{  F_h (\vrh^k\vth^k,\vuh^k)\jump{\phi_h} }
+\intO{ \kappa \Gradedge \vth^k \cdot \Gradedge \phi_h}\nonumber \\
&  = \intO{\left(2\mu \abs{\Dhuhk}^2 + \lambda \abs{\Divh\vuh^k}^2  -p_h^k \Divh \vuh^k \right)\phi_h}, \quad \mbox{for all}\ \phi_h \in Q_h .
\label{scheme_ns_ene}
\end{align}
\end{subequations}
\end{Definition}

%
%
\noindent It is suitable to reformulate the convective terms in the following way, see \cite[Lemma 2.5]{FVNS_FLMS}. For reader's convenience we reproduce the proof. 
\begin{Lemma}\label{lem_convective_trans}
For any $r_h , \bfv_h  \in \bQh$, and $\phi \in C^1(\Omega)$, it holds
\begin{align*}
&\intO{ r_h \bfv_h  \cdot \Grad \phi } - \sum_{\sigma \in \facesint} \intSh{ F_h  [ r_h,\bfv_h  ] \ \jump{\Pim \phi }  }
\\&=
 \sum_{\sigma \in \facesint} \intSh{
\left( \frac{1}{2} |\Ov{\bfv_h } \cdot \vc{n}|  +  h^\eps  + \frac{1}{4} \jump{\bfv_h } \cdot \vc{n}   \right) \jump{r_h }  \jump{\Pim  \phi }  }
+ \intO{ r_h \bfv_h  \cdot \left(\Grad \phi - \Gradh \big(\Pim  \phi\big) \right)}.
\end{align*}
\end{Lemma}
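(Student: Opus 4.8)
The plan is to start from the definition of the numerical flux $F_h[r_h,\bfv_h] = Up[r_h,\bfv_h] - h^\eps \jump{r_h}$ and expand the upwind term using the identity
$Up[r_h,\bfv_h] = \Ov{r}_h\,\Ov{\bfv_h}\cdot\vc{n} - \tfrac12|\Ov{\bfv_h}\cdot\vc{n}|\jump{r_h}$ recorded just before the statement. This splits the face sum $\sum_\sigma \int_\sigma F_h[r_h,\bfv_h]\jump{\Pim\phi}$ into three pieces: a central-difference piece involving $\Ov{r}_h\,\Ov{\bfv_h}\cdot\vc{n}$, the upwind-dissipation piece $-\tfrac12|\Ov{\bfv_h}\cdot\vc{n}|\jump{r_h}\jump{\Pim\phi}$, and the artificial-diffusion piece $-h^\eps\jump{r_h}\jump{\Pim\phi}$. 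The last two already have the right form to appear on the right-hand side (with a sign flip, since they are subtracted), so the work is to show that
$\intO{r_h\bfv_h\cdot\Grad\phi}$ minus the central-difference piece equals $\intO{r_h\bfv_h\cdot(\Grad\phi - \Gradh(\Pim\phi))}$ plus the quadratic term $\tfrac14\sum_\sigma\int_\sigma(\jump{\bfv_h}\cdot\vc{n})\jump{r_h}\jump{\Pim\phi}$.

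First I would rewrite $\intO{r_h\bfv_h\cdot\Grad\phi} = \intO{r_h\bfv_h\cdot\Gradh(\Pim\phi)} + \intO{r_h\bfv_h\cdot(\Grad\phi - \Gradh(\Pim\phi))}$; the second summand is exactly the remainder term in the statement, so it remains to identify $\intO{r_h\bfv_h\cdot\Gradh(\Pim\phi)}$ with the central-difference face sum plus the quadratic correction. For this I would use the Grad--Div duality / integration-by-parts identity displayed in the excerpt, $\intO{\Gradh r_h\cdot\bfv_h} = \sum_\sigma\int_\sigma \Ov{\bfv_h}\cdot(\jump{r_h}\vc{n})$ and its variants, applied with the roles suitably arranged, to turn the volume integral $\intO{r_h\bfv_h\cdot\Gradh(\Pim\phi)}$ into a sum over interior faces of $\Ov{r_h\bfv_h}\cdot\vc{n}\,\jump{\Pim\phi}$ (the key point being that $\int_{\pd K}\Ov{g_h}\vc{n}\ds = \tfrac12\int_{\pd K}\jump{g_h}\vc{n}\ds$ converts a cellwise sum into a face sum). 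Then the algebraic identity $\Ov{r_h\bfv_h} = \Ov{r}_h\,\Ov{\bfv_h} + \tfrac14\jump{r_h}\jump{\bfv_h}$ (the companion of the product rule \eqref{product_rule} quoted in the excerpt) separates off precisely the central term $\Ov{r}_h\,\Ov{\bfv_h}\cdot\vc{n}\,\jump{\Pim\phi}$ and the quadratic correction $\tfrac14(\jump{\bfv_h}\cdot\vc{n})\jump{r_h}\jump{\Pim\phi}$. Collecting all contributions and matching signs yields the claimed identity.

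The main obstacle, and the only place where care is genuinely needed, is the bookkeeping in the duality step: one must be consistent about which of $r_h$, $\bfv_h$, $\Pim\phi$ sits on which side of the discrete integration-by-parts formula, and track the factors of $\tfrac12$ coming from $\int_{\pd K}\Ov{g_h}\vc{n} = \tfrac12\int_{\pd K}\jump{g_h}\vc{n}$ versus those already present in the averages. Since $\bfv_h$ is vector-valued while $r_h$ is scalar (for the density/energy fluxes) and both are vector-valued componentwise for the momentum flux, I would do the computation once with generic $r_h,\bfv_h\in Q_h$ and note that the componentwise extension is immediate. No regularity beyond $\phi\in C^1(\Omega)$ is used, and indeed the only place $\phi$ is differentiated is inside the harmless remainder term; everything else is exact discrete algebra, so there is no error-estimate step here at all.
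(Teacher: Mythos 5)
Your proposal is correct and follows essentially the same route as the paper: split off the projection remainder $\intO{r_h\bfv_h\cdot(\Grad\phi-\Gradh(\Pim\phi))}$, use the Grad--Div duality \eqref{basic_eq2} to convert $\intO{r_h\bfv_h\cdot\Gradh(\Pim\phi)}$ into the face sum $\sum_\sigma\int_\sigma\Ov{r_h\bfv_h}\cdot\vc{n}\,\jump{\Pim\phi}\ds$, and then apply $\Ov{r_h\bfv_h}-\Ov{r_h}\,\Ov{\bfv_h}=\tfrac14\jump{r_h}\jump{\bfv_h}$ to peel off the quadratic correction. The only cosmetic difference is that you expand $F_h$ at the outset whereas the paper adds and subtracts the dissipation terms at the end to reconstitute it; the algebra is identical.
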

\begin{proof}Using the basic equalities \eqref{avg_dif_order}--\eqref{basic_eq2}, we have
\begin{align*}
\intO{ r_h \bfv_h  \cdot \Grad \phi } &= \sum_{K \in \grid} \int_{K} r_h \bfv_h  \cdot \Grad \phi \ \dx
\\&
= \sum_{K \in \grid} \int_{K} r_h \bfv_h  \cdot (\Grad \phi - \Gradh \big(\Pim  \phi\big)) \dx
+ \sum_{K \in \grid} (r_h \bfv_h)_K  \cdot \int_{\partial K}    \vc{n} \, \Ov{\Pim  \phi }  \ds
\\&=
 \intO{ r_h \bfv_h  \cdot (\Grad \phi - \Gradh \big(\Pim  \phi\big) )}
 - \sum_{\sigma \in \facesint} \intSh{ \jump{ r_h \bfv_h  }  \cdot \vc{n} \, \Ov{\Pim  \phi  } }
\\&=
 \intO{ r_h \bfv_h  \cdot (\Grad \phi - \Gradh \big(\Pim  \phi\big) )}
+ \sum_{\sigma \in \facesint} \intSh{ \Ov{ r_h \bfv_h  }  \cdot \vc{n} \jump{ \Pim  \phi  }  }
\\&=
 \intO{ r_h \bfv_h  \cdot (\Grad \phi - \Gradh \big(\Pim  \phi\big) )}
+ \sum_{\sigma \in \facesint} \intSh{ \left( \Ov{ r_h \bfv_h }   - \Ov{ r_h } \ \Ov{\bfv_h } \right) \cdot \vc{n} \jump{ \Pim  \phi  }  }
\\& \qquad
+\sum_{\sigma \in \facesint} \intSh{ \Ov{ r_h } \ \Ov{\bfv_h }  \cdot \vc{n} \jump{\Pim  \phi }  }
\pm
 \sum_{\sigma \in \facesint} \intSh{
\left( \frac{1}{2} |\Ov{\bfv_h } \cdot \vc{n}|  + h^\eps \right) \jump{r_h }  \jump{ \Pim  \phi }  }
\\&=
 \intO{ r_h \bfv_h  \cdot (\Grad \phi - \Gradh \big(\Pim  \phi\big) )}
+\sum_{\sigma \in \facesint} \intSh{ \frac14 \jump{ r_h } \jump{\bfv_h }  \cdot \vc{n} \jump{ \Pim  \phi  }  }
\\& \qquad
+
\sum_{\sigma \in \facesint} \intSh{ F_h  [ r_h,\bfv_h  ]  \jump{\Pim \phi }  } +
\sum_{\sigma \in \facesint} \intSh{\left( \frac{1}{2} |\Ov{\bfv_h } \cdot \vc{n}|  +  h^\eps   \right) \jump{r_h }  \jump{ \Pim  \phi   }} .
\end{align*}
\end{proof}


 Finally, we need a discrete  analogue of the Sobolev--type inequality that can be proved exactly as \cite[Theorem 11.23]{FeiNov_book}.
\begin{Lemma}[Sobolev-type inequality]\label{sobolev_ineq}
Let the function $r \geq 0$ be such that
\[ 0< \intO{r} =c_M, \mbox{ and } \intO{r^\gamma}\leq  c_E \mbox{ for } \gamma>1,
\]
where $c_M$ and $c_E$ are some positive constants.
Then the following Poincar{\'e}--Sobolev type inequality holds true
\begin{equation*}
\norm{v_h}_{L^6(\Omega)} \leq c \norm{\Gradh v_h}_{L^2(\Omega)}^2
+c \left(\intO{r|v_h|}\right)^2
\aleq  c \norm{\Gradh v_h}_{L^2(\Omega)}^2  +c_M
+c \intO{r|v_h|^2}
\end{equation*}
for any $v_h\in Q_h$, where the constant $c$ depends on $c_M$ and $c_E$ but not on the mesh parameter $h$.
\end{Lemma}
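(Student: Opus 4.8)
\emph{Proof plan.} The plan is to derive the inequality from two standard ingredients and then finish with Young's inequality, which is exactly the route of \cite[Theorem~11.23]{FeiNov_book}. The first ingredient is a purely \emph{mesh} statement, the discrete Sobolev embedding for piecewise constant functions: for $d\le 3$ and any $v_h\in Q_h$,
\[
\norm{v_h}_{L^6(\Omega)}^2 \aleq \norm{\Gradh v_h}_{L^2(\Omega)}^2 + \norm{v_h}_{L^2(\Omega)}^2 ,
\]
with a constant depending only on $\Omega$ and the (here uniform) mesh geometry. The second ingredient is a \emph{weighted Poincar\'e--Wirtinger} inequality that trades the full $L^2$ norm on the right-hand side for the weighted quantity $\intO{r\abs{v_h}}$; this is the only place where the hypotheses on $r$ enter.

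For the discrete Sobolev estimate one combines a discrete $BV$-type isoperimetric inequality on the uniform mesh with H\"older's inequality applied to suitable powers $\abs{v_h}^{\alpha}$, using $|\sigma|\,h\aleq |D_\sigma|$ to pass between face sums and volume integrals; the resulting sum of jumps $\sum_{\sigma\in\facesint}\intSh{\abs{\jump{v_h}}}$ is then bounded by $\norm{\Gradh v_h}_{L^2(\Omega)}$ via the Cauchy--Schwarz inequality. This being standard, we only quote it, referring to \cite{EyGaHe, GallouetMAC} and \cite[Chapter~11]{FeiNov_book}.

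For the weighted Poincar\'e inequality, write $v_h = \big(v_h-\myangle{v_h}\big)+\myangle{v_h}$ with the plain mean $\myangle{v_h}=\frac{1}{|\Omega|}\intO{v_h}$. The discrete Poincar\'e--Wirtinger inequality gives $\norm{v_h-\myangle{v_h}}_{L^p(\Omega)}\aleq \norm{\Gradh v_h}_{L^2(\Omega)}$ for every admissible $p$ (i.e.\ $p\le 6$ if $d=3$, any $p<\infty$ if $d=2$). To control the constant $\myangle{v_h}$ we pair with $r$: since $\intO{r}=c_M>0$,
\[
c_M\,\abs{\myangle{v_h}} = \intO{r\,\abs{\myangle{v_h}}} \le \intO{r\,\abs{v_h}} + \intO{r\,\abs{v_h-\myangle{v_h}}} \le \intO{r\,\abs{v_h}} + \norm{r}_{L^{\gamma}(\Omega)}\,\norm{v_h-\myangle{v_h}}_{L^{\gamma'}(\Omega)} ,
\]
where $\gamma'=\gamma/(\gamma-1)<\infty$ because $\gamma>1$, and $\norm{r}_{L^{\gamma}(\Omega)}\le c_E^{1/\gamma}$ by assumption. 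Bounding $\norm{v_h-\myangle{v_h}}_{L^{\gamma'}(\Omega)}\aleq \norm{\Gradh v_h}_{L^2(\Omega)}$ (by the Poincar\'e--Wirtinger inequality above, after interpolating if $\gamma'$ exceeds the admissible exponent) yields $\abs{\myangle{v_h}}\aleq \intO{r\abs{v_h}}+\norm{\Gradh v_h}_{L^2(\Omega)}$ with a constant depending on $c_M$ and $c_E$. Together with $\norm{v_h}_{L^2(\Omega)}\le \norm{v_h-\myangle{v_h}}_{L^2(\Omega)}+\abs{\Omega}^{1/2}\abs{\myangle{v_h}}$ this gives
\[
\norm{v_h}_{L^2(\Omega)}^2 \aleq \norm{\Gradh v_h}_{L^2(\Omega)}^2 + \Big(\intO{r\abs{v_h}}\Big)^2 .
\]

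Substituting this into the discrete Sobolev bound proves the first inequality of the Lemma. The closing estimate $\big(\intO{r\abs{v_h}}\big)^2\aleq c_M + \intO{r\abs{v_h}^2}$ follows at once from the weighted Cauchy--Schwarz inequality $\intO{r\abs{v_h}}\le c_M^{1/2}\big(\intO{r\abs{v_h}^2}\big)^{1/2}$, absorbing the constant $c_M$ into the right-hand side. \textbf{The main obstacle} is the weighted Poincar\'e step: since $\gamma$ is only assumed to satisfy $\gamma>1$, the weight $r$ is barely above $L^1$, so the pairing with the oscillation $v_h-\myangle{v_h}$ and the subsequent bookkeeping of exponents (particularly in three dimensions) must be arranged so that the discrete gradient genuinely controls the norm of $v_h-\myangle{v_h}$ that appears; this is precisely the point carried out in \cite[Theorem~11.23]{FeiNov_book}, whose argument we follow.
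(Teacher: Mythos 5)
The paper itself gives no argument for this lemma beyond the pointer to \cite[Theorem 11.23]{FeiNov_book}, so your reconstruction is being compared against that reference rather than against an in-paper proof. Your overall architecture --- discrete Sobolev embedding, plus a weighted Poincar\'e step that controls the mean $\myangle{v_h}$ by pairing with $r$, plus the weighted Cauchy--Schwarz inequality $\left(\intO{r\abs{v_h}}\right)^2 \le c_M \intO{r\abs{v_h}^2}$ at the end --- is indeed the standard route and is what the cited theorem does. For $\gamma \ge 6/5$ (when $d=3$; for $d=2$ any $\gamma>1$) your argument is complete.

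The gap is exactly where you flagged it, but it is a real gap, not a bookkeeping issue that ``interpolation'' can repair. In the step
\[
\intO{r\,\abs{v_h-\myangle{v_h}}} \le \norm{r}_{L^{\gamma}(\Omega)}\,\norm{v_h-\myangle{v_h}}_{L^{\gamma'}(\Omega)},
\]
the exponent $\gamma'=\gamma/(\gamma-1)$ exceeds $6$ as soon as $1<\gamma<6/5$, and in three dimensions $\norm{w}_{L^{\gamma'}}$ with $\gamma'>6$ is \emph{not} controlled by $\norm{\Gradh w}_{L^2}+\norm{w}_{L^2}$: interpolation only produces norms between two norms you already control, and $L^6$ is the endpoint, so no interpolation argument reaches $L^{\gamma'}$. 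The device that closes the case $1<\gamma<6/5$ in \cite[Theorem 11.23]{FeiNov_book} is genuinely different: one does not pair $r$ against the full oscillation but against a truncation $T_k(v_h)=\max\{-k,\min\{k,v_h\}\}$ (equivalently, one works on the sublevel set $\{\abs{v_h}\le \lambda\}$, whose measure is bounded below by a constant depending only on $c_M$, $c_E$, $\norm{v_h}_{L^6}$ via the Chebyshev/H\"older estimate $\intO{r\,\ind_{\{\abs{v_h}>\lambda\}}}\le c_E^{1/\gamma}\big(\norm{v_h}_{L^6}^6/\lambda^6\big)^{1/\gamma'}$). Since the truncated function is bounded, only an $L^1$ or $L^p$ ($p<\infty$ arbitrary but applied to a bounded function) norm of the oscillation is needed, and the Poincar\'e--Wirtinger inequality suffices; alternatively the book's compactness--contradiction argument uses the same truncation to pass to the limit in $\intO{r_n\abs{v_n}}$. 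Your writeup acknowledges the difficulty and defers to the reference, but as written the chain of inequalities is false for $1<\gamma<6/5$, so the proof is incomplete for the full range $\gamma>1$ claimed in the statement. (Two mitigating remarks: in the only place the paper invokes the lemma, $r=\vrh$ satisfies (A1) and is uniformly bounded, so one may take $\gamma=2$ and your argument applies verbatim; and note that the displayed inequality in the statement is dimensionally consistent only if the left-hand side is read as $\norm{v_h}_{L^6(\Omega)}^2$, which is how you have implicitly, and correctly, read it.)
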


\section{Stability}\label{sec:stability_ns}
In this section we show the mass conservation, energy dissipation and entropy inequality for the numerical solutions obtained by the
finite volume scheme \eqref{scheme_ns}.
In what follows we assume  $\vr_h,$ $\vt_h >0.$ 
Note, however, that the non-negativity of the discrete density follows from the renormalized continuity equation Lemma~\ref{lem_renormalized_density} in an analogous way as in \cite{Karper}.

\subsection{Mass conservation}
Setting $\phi_h = 1$ in  \eqref{scheme_ns_den} we derive the mass conservation
\begin{equation}\label{mass_conservation}
\intO{\vrh(t)} = \intO{\vrh(0)} =M_0 >0,\; t\geq 0.
\end{equation}
\subsection{Total energy dissipation}
\begin{Theorem}[Energy balance]\label{thm_energy_stability}
Let $(\vrh,\vuh,\vth)$ satisfy \eqref{scheme_ns}. Then for any $k=1,\ldots,N_T$ it holds
\begin{multline}\label{energy_stability}
 D_t \intO{ \left(\frac{1}{2}  \vrh^k |\vuh^k|^2 + { c_v \vrh^k \vth^k} \right) }
+  h^\eps \sum_{ \sigma \in \facesint } \intSh{  \Ov{ \vrh^k }  \jump{\vuh^k}^2 }
\\
+ \frac{\TS}{2} \intO{ \vrh^{k-1}|D_t \vuh^k|^2  }
+ \frac12 \sum_{ \sigma \in \facesint } \intSh{ (\vrh^k)^{\rm up} |\Ov{\vuh^k} \cdot \vc{n} |\jump{ \vuh^k } ^2   }
= 0
.
\end{multline}
\end{Theorem}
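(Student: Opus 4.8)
The plan is to derive the kinetic energy balance and the thermal energy balance separately and then add them, observing that the pressure work terms cancel. First I would test the momentum scheme \eqref{scheme_ns_mom} with $\bfphi_h = \vuh^k$. The discrete time-derivative term is handled by the elementary identity $D_t(\vrh^k\vuh^k)\cdot\vuh^k = D_t(\tfrac12\vrh^k|\vuh^k|^2) + \tfrac{\TS}{2}\vrh^{k-1}|D_t\vuh^k|^2$, after also using the continuity scheme \eqref{scheme_ns_den} tested with $\phi_h = \tfrac12|\vuh^k|^2$ to absorb the $\vr$-difference; this standard manipulation (as in \cite{Karper}, \cite{FVNS_FLMS}) produces the term $\frac{\TS}{2}\intO{\vrh^{k-1}|D_t\vuh^k|^2}$ and the convective flux for $\tfrac12\vr|\vu|^2$. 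The viscous terms on the right-hand side give $-2\mu\intO{|\Dhuhk|^2} - \lambda\intO{|\Divh\vuh^k|^2}$, which will later be matched by the dissipative source in the thermal equation. The pressure term yields $-\intO{p_h^k\Divh\vuh^k}$.

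Next I would treat the convective contributions. Testing \eqref{scheme_ns_den} with $\phi_h=\tfrac12|\vuh^k|^2$ and combining with the convective term from the momentum equation tested by $\vuh^k$, one rewrites the sum of upwind fluxes at each interior face; using $\Ov{r_h}\jump{v_h} + \jump{r_h}\Ov{v_h}=\jump{r_h v_h}$ and the upwind formula $F_h = \Ov{r_h}\,\Ov{\vuh}\cdot\vn - \tfrac12|\Ov{\vuh}\cdot\vn|\jump{r_h} - h^\eps\jump{r_h}$, the centered parts telescope away by Lemma stated in \eqref{basic_eq2}, and the numerical-diffusion parts produce exactly the two nonnegative face terms $h^\eps\sum_\sigma\int_\sigma \Ov{\vrh^k}\jump{\vuh^k}^2$ and $\tfrac12\sum_\sigma\int_\sigma(\vrh^k)^{\rm up}|\Ov{\vuh^k}\cdot\vn|\jump{\vuh^k}^2$. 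This is the computational heart of the argument and the step most prone to sign and bookkeeping errors — one must carefully track which jump terms come with which weight and verify that the leftover ``cross'' terms combine into a perfect square times the upwind density. I expect this to be the main obstacle, though it is essentially the same computation as in \cite[proof of the analogous energy estimate]{FVNS_FLMS} adapted to include the $h^\eps$ diffusion.

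Finally I would deal with the thermal energy. Testing \eqref{scheme_ns_ene} with $\phi_h=1$ kills the diffusive term $\intO{\kappa\Gradedge\vth^k\cdot\Gradedge\phi_h}$ and, using mass conservation-type cancellation of the convective flux (the jump of a constant vanishes), leaves $c_v D_t\intO{\vrh^k\vth^k} = \intO{2\mu|\Dhuhk|^2 + \lambda|\Divh\vuh^k|^2 - p_h^k\Divh\vuh^k}$. Adding this to the kinetic energy balance, the viscous dissipation terms cancel exactly against those from the momentum test, and the two pressure-work terms $-\intO{p_h^k\Divh\vuh^k}$ and $+\intO{p_h^k\Divh\vuh^k}$ cancel as well, leaving precisely \eqref{energy_stability}. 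I would close by noting that all steps use only the already-established discrete integration-by-parts and product-rule identities, so no regularity beyond $\vrh,\vth>0$ and $(\vrh^k,\vuh^k,\vth^k)\in Q_h^3$ is needed.
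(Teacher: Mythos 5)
Your proposal is correct and follows essentially the same route as the paper: the paper simply cites the kinetic energy balance from \cite[equation (3.4)]{FVNS_FLMS} (which is exactly the computation you sketch, testing \eqref{scheme_ns_mom} with $\vuh^k$ and \eqref{scheme_ns_den} with $\tfrac12|\vuh^k|^2$), then sets $\phi_h=1$ in \eqref{scheme_ns_ene} and sums, with the viscous and pressure terms cancelling as you describe.
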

\begin{proof}
We start by recalling the kinetic energy balance, cf.~\cite[equation (3.4)]{FVNS_FLMS},
\begin{multline*} \label{k1}
 D_t \intO{ \frac{1}{2} \vrh |\vuh^k|^2  }
+  2 \mu   \intO{  |\Dhuhk |^2}   + \lambda \intO{  |\Divh \vuh^k|^2}
- \intO{p_h^k \Divh \vuh^k }
\\
+  h^\eps \sum_{ \sigma \in \facesint } \intSh{  \Ov{ \vrh^k }  \jump{\vuh^k}^2 }
+ \frac{\TS}{2} \intO{ \vrh^{k-1}|D_t \vuh^k|^2  }
+ \frac12 \sum_{ \sigma \in \facesint } \intSh{ (\vrh^k)^{\rm up} |\Ov{\vuh^k} \cdot \vc{n} |\jump{ \vuh^k } ^2   } =0.
\end{multline*}
Setting $\phi_h=1$ in \eqref{scheme_ns_ene} we get
\[
D_t \intO{  c_v\vrh^k \vth^k  }
= \intO{\left(2\mu \abs{\Dhuhk}^2 + \lambda \abs{\Divh\vuh^k}^2 - p_h^k \Divh \vuh^k \right)}.
\]
Finally, we sum the previous two equations and finish the proof.
\end{proof}

Theorem \ref{thm_energy_stability}  implies the  energy dissipation
\begin{equation}\label{d_en_ineq_old}
 E_h(t)  \leq E_0,
\end{equation}
where
\[   E_h (t)  := \intO{ \left(\frac{1}{2} \vrh(t) |\vuh(t)|^2 +c_v \vrh(t)  \vth(t) \right)}
 \ \mbox{  and  }  \
  E_0 := E_h(0)= \intO{ \left(\frac{1}{2} \vrh^0 |\vuh^0|^2 + c_v \vrh^0 \vth^0 \right) }.
\]
\subsection{First a priori estimates}
Let us summarize {\sl a priori} estimates that we have obtained so far from \eqref{mass_conservation} and \eqref{energy_stability}.
\begin{equation}\label{N}
\vrh \in L^\infty \left(0,T; L^1(\Omega) \right), \quad  \vrh \vuh^2 \in L^\infty \left(0,T; L^1(\Omega) \right), \quad E_h   \in L^\infty \left(0,T; L^1(\Omega) \right), \quad
p_h \in L^\infty \left(0,T; L^1(\Omega) \right) .
\end{equation}
For simplicity, hereafter we denote by $\norm{\cdot}_{L^p} $  and $\norm{\cdot}_{L^p L^q}$ the norms $\norm{\cdot}_{L^p(\Omega)} $  and $\norm{\cdot}_{L^p(0,T; L^q(\Omega))}$, respectively.

\subsection{Entropy equation}
The physical entropy for the perfect gas law is defined  as  a function of density $\vr$ and temperature $\vt$  as
\[s(\vr, \vt)= \log \left(\frac{\vt^{c_v}}{\vr} \right),\]
 and  can be rewritten in terms of density $\vr$ and pressure $p$ as
\[s=s(\vr,p)=\frac{1}{\gamma-1}\log \left(\frac{p}{\vr^\gamma} \right), \ \quad \gamma=\frac{1}{c_v} +1. \]
Then, it is  easy to realize that
$$
(\vr,p)  \mapsto -\vr s(\vr,p)= -\frac{\vr}{\gamma-1} \log \left(\frac{p}{\vr^\gamma}\right)
$$
is a convex function of $(\vr, p)$ for $\vr>0$ and $p>0$.
Moreover, it holds
\begin{equation}\label{rho_s_convex}
\nabla_\vr(-\vr s) = c_v+1 -s, \quad \nabla_p(-\vr s)= - c_v/\vt .
\end{equation}

Before deriving the discrete entropy inequality, we  list two renormalized equations.  We shall use the notation $\co{A}{B} \equiv [ \min\{A,B\} , \max\{A,B\}]$ in what follows. 
\begin{Lemma}\cite[Section 4.1]{FKN2016}(Renormalized continuity equation)\label{lem_renormalized_density}
Let $(\vrh^k,\vuh^k)$ satisfy \eqref{scheme_ns_den}. Then for any $\phi_h \in Q_h$ and any function $B$ that is $C^2$ on the range of $\vr_h^k$
we have
\begin{multline}\label{renormalized_density}
\intO{ D_t B(\vrh^k) \phi_h   }  - \sum_{ \sigma \in \facesint } \intSh{  Up [ B(\vrh^k), \vuh^k ] \jump{ \phi_h } }
 + \intO{ \phi_h\left( B'(\vrh^k) \vrh^k - B(\vrh^k) \right) \Divh \vuh^k    }
\\
 = - \intO{ \frac{ \TS}{2} B''({ \xi^k_{\vr,h}}) | D_t \vrh^k |^2 \phi_h   }
 - \sum_{ \sigma \in \facesint } \intSh{  \frac{B''( \eta^k_{\vr,h})}{2} \jump{ \vrh^k }^2  |\Ov{\vuh^k} \cdot \vn |  \phi_h }
  - h^{\eps}  \sum_{ \sigma \in \facesint } \intSh{ \jump{ \vrh^k } \jump{B'( \vrh^k)\phi_h}  },
\end{multline}
where $\xi^k_{\vr,h} \in \co{\vrh^{k-1}}{\vrh^k}$ and $\eta^k_{\vr,h}\in \co{\vr^k_K}{\vr^k_L}$ for any $ \sigma (= K|L) \in \facesi$,
$i = 1,\ldots,d.$
\end{Lemma}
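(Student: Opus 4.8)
The plan is to test the discrete continuity equation \eqref{scheme_ns_den} with the test function $B'(\vrh^k)\phi_h$ (which is admissible since $B\in C^2$ on the range of $\vrh^k$ and everything is piecewise constant), and then to convert each of the three resulting terms — the discrete time derivative, the upwind convective part, and the artificial-diffusion part — into the quantities appearing in \eqref{renormalized_density}, with the error made explicit through second-order Taylor expansions of $B$. Concretely, I would write
\[
\intO{ D_t \vrh^k \, B'(\vrh^k)\phi_h } - \sum_{\sigma\in\facesint}\intSh{ F_h(\vrh^k,\vuh^k)\jump{B'(\vrh^k)\phi_h}} = 0,
\]
and treat the three pieces separately.

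\textbf{Time-derivative term.} Here the key identity is the elementary ``chain rule with remainder'' for backward differences: for any $C^2$ function $B$,
\[
B'(\vrh^k)\,(\vrh^k-\vrh^{k-1}) = B(\vrh^k) - B(\vrh^{k-1}) + \tfrac12 B''(\xi^k_{\vr,h})(\vrh^k-\vrh^{k-1})^2,
\]
with $\xi^k_{\vr,h}\in\co{\vrh^{k-1}}{\vrh^k}$ by Taylor's theorem. Dividing by $\TS$ and multiplying by $\phi_h$, this produces exactly $\intO{D_t B(\vrh^k)\phi_h}$ plus the first error term on the right of \eqref{renormalized_density}, namely $-\intO{\frac{\TS}{2}B''(\xi^k_{\vr,h})|D_t\vrh^k|^2\phi_h}$.

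\textbf{Convective (upwind) term.} Split $F_h = Up[\vrh^k,\vuh^k] - h^{\eps}\jump{\vrh^k}$; the $h^\eps$-part is handled below. For the upwind part, the product rule \eqref{product_rule} gives $\jump{B'(\vrh^k)\phi_h} = \Ov{B'(\vrh^k)}\jump{\phi_h} + \jump{B'(\vrh^k)}\Ov{\phi_h}$, so I would separate the jump of $\phi_h$ from the jump of $B'(\vrh^k)$. The term with $\jump{\phi_h}$ should, after using the analogue of Lemma~\ref{basic_eq2}/Grad--Div duality, be matched against $\sum_\sigma\intSh{Up[B(\vrh^k),\vuh^k]\jump{\phi_h}}$ up to a discretization-of-$B$ error; the discrepancy between $\Ov{B'(\vrh^k)}\,\jump{\vrh^k}\cdot(\text{upwind weight})$ and $\jump{B(\vrh^k)}\cdot(\text{upwind weight})$, together with the central and absolute-value pieces of $Up$, is where the $\Divh\vuh^k$ term $\intO{\phi_h(B'(\vrh^k)\vrh^k - B(\vrh^k))\Divh\vuh^k}$ and the dissipative remainder $-\sum_\sigma\intSh{\frac{B''(\eta^k_{\vr,h})}{2}\jump{\vrh^k}^2|\Ov{\vuh^k}\cdot\vn|\phi_h}$ come out, again via a second-order Taylor expansion $B(\vr^k_L)-B(\vr^k_K) = B'(\cdot)(\vr^k_L-\vr^k_K) + \tfrac12 B''(\eta^k_{\vr,h})(\vr^k_L-\vr^k_K)^2$ with $\eta^k_{\vr,h}\in\co{\vr^k_K}{\vr^k_L}$.

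\textbf{Artificial-diffusion term.} The contribution $h^\eps\sum_\sigma\intSh{\jump{\vrh^k}\jump{B'(\vrh^k)\phi_h}}$ is already in the desired form — it is literally the last term on the right of \eqref{renormalized_density} — so nothing is needed beyond bookkeeping.

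\textbf{Main obstacle.} I expect the delicate step to be the convective term: one must carefully track \emph{which} Taylor expansion (at the face, between cell values $\vr^k_K,\vr^k_L$) produces the $\Divh\vuh^k$-weighted $(B'\vr - B)$ term versus the $|\Ov{\vuh}\cdot\vn|$-weighted dissipation, and ensure the ``central'' part of the upwind flux recombines with the Grad--Div duality \eqref{basic_eq2} to give exactly $Up[B(\vrh^k),\vuh^k]$ against $\jump{\phi_h}$ with no leftover. Getting the constant $\tfrac12$ and the correct localization of the intermediate points $\xi^k_{\vr,h},\eta^k_{\vr,h}$ right is the crux; the rest is routine manipulation with the algebraic identities already recorded in Section~\ref{sec_Notations}. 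Since this is quoted as \cite[Section 4.1]{FKN2016}, I would follow that computation, adapting only the extra $h^\eps$-diffusion bookkeeping which is new here.
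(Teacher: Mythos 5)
Your proposal is correct and follows essentially the same route as the paper, which does not reproduce a proof but simply cites \cite[Section 4.1]{FKN2016}: there, as in your outline, one tests \eqref{scheme_ns_den} with $B'(\vrh^k)\phi_h\in Q_h$, uses the second-order Taylor expansion in time to produce the $\frac{\TS}{2}B''(\xi^k_{\vr,h})|D_t\vrh^k|^2$ remainder, a face-wise Taylor expansion between $\vr^k_K$ and $\vr^k_{K}{}^{\rm up}$ together with $\sum_{\sigma\in\facesK}|\sigma|\,\Ov{\vuh^k}\cdot\vn = |K|(\Divh\vuh^k)_K$ to produce the $(B'\vr-B)\Divh\vuh^k$ term and the $|\Ov{\vuh^k}\cdot\vn|$-weighted dissipation, while the $h^\eps$-diffusion contribution passes through verbatim. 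Your identification of the convective term as the delicate step, and of the intermediate points $\xi^k_{\vr,h},\eta^k_{\vr,h}$, is accurate; no gap.
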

\begin{Lemma}\cite[Lemma 3.3]{HS_NSF}(Renormalized  internal energy equation)\label{lem_renormalized_energy}
Let $(\vrh, \vuh, \vth)$ satisfy  equation  \eqref{scheme_ns_ene}. Then  for any $\sigma \in K|L$ there exists
$\xi_{\vt,h}^k \in \co{\vth^{k-1}}{\vth^k}$ and $\eta_{\vt,h}^k \in \co{\vt_K^{k}}{\vt_L^k}$, such that for any $\phi_h \in Q_h$, and  any function $\chi$ that is $C^2$ on the range of $\vt_h^k$ it holds
\begin{equation}\label{eq_renormalized_energy}
\begin{split}
c_v &\intO{ D_t \left(\vrh^k \chi(\vth^k) \right) \phi_h  } -c_v \sum_{ \sigma \in \facesint } \intSh{ Up(\vrh^k \chi(\vth^k), \vuh^k) \jump{\phi_h} }
+\sum_{ \sigma \in \facesint } \intSh{ \frac{\kappa}{h} \jump{\vth^k} \jump{\chi'(\vth^k) \phi_h} }
\\  = &
\intO{ \left(2\mu |\Dhuhk|^2  + \lambda |\Divh \vuh^k|^2   -p_h^k \Divh \vuh^k \right)  \chi'(\vth^k) \phi_h}
-\frac{c_v \TS}{2} \intO{ \chi''(\xi_{\vt,h}^k) \vrh^{k-1} |D_t \vth^k|^2 \phi_h }
\\ &
+ \frac{c_v}{2} \sum_{ \sigma \in \facesint } \intSh{\chi''(\eta_{\vt,h}^k)\jump{\vth^k}^2 (\vrh^k)^{\rm out} [ \Ov{\vuh^k} \cdot \vc{n}]^- \phi_h }
- c_v h^\eps \sum_{ \sigma \in \facesint } \intSh{ \jump{\vrh^k} \jump{ \left(\chi(\vth^k) - \chi'(\vth^k)\vth^k \right) \phi_h} }
\\ &   - c_v h^{\eps}  \sum_{ \sigma \in \facesint } \intSh{ \jump{ \vrh^k\vt_h^k } \jump{\chi'( \vt^k)\phi_h}  },
\end{split}
\end{equation}
\end{Lemma}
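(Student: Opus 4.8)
The plan is to follow the derivation of the renormalized continuity equation (Lemma~\ref{lem_renormalized_density}, cf.~\cite[Section~4.1]{FKN2016}), which is also the template for \cite[Lemma~3.3]{HS_NSF}. One starts from the weak internal energy balance \eqref{scheme_ns_ene} and inserts the test function $\psi_h:=\chi'(\vth^k)\phi_h$; this is admissible since $\vth^k,\phi_h\in Q_h$ and $\chi$ is $C^2$ on the range of $\vth^k$, whence $\psi_h\in Q_h$. The diffusion term and the right--hand side of \eqref{scheme_ns_ene} then yield verbatim the terms $\sum_{\sigma\in\facesint}\intSh{\frac{\kappa}{h}\jump{\vth^k}\jump{\chi'(\vth^k)\phi_h}}$ and $\intO{\left(2\mu|\Dhuhk|^2+\lambda|\Divh\vuh^k|^2-p_h^k\Divh\vuh^k\right)\chi'(\vth^k)\phi_h}$ of \eqref{eq_renormalized_energy}, so that it only remains to ``renormalize'' the discrete time derivative and the convective flux, i.e.\ to replace $\chi'(\vth^k)D_t(\vrh^k\vth^k)$ by $D_t(\vrh^k\chi(\vth^k))$ and $F_h(\vrh^k\vth^k,\vuh^k)$ by $Up(\vrh^k\chi(\vth^k),\vuh^k)$, collecting the discrepancies on the right--hand side.

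For the time derivative I would argue cell by cell. On a fixed $K\in\grid$, the second--order Taylor formula for $\chi$ between $\vt^{k-1}_K$ and $\vt^k_K$ provides $\xi^k_{\vt,h}\in\co{\vth^{k-1}}{\vth^k}$ with
\begin{multline*}
\vr^k_K\chi(\vt^k_K)-\vr^{k-1}_K\chi(\vt^{k-1}_K)
= \chi'(\vt^k_K)\big(\vr^k_K\vt^k_K-\vr^{k-1}_K\vt^{k-1}_K\big) \\
+\big(\chi(\vt^k_K)-\vt^k_K\chi'(\vt^k_K)\big)\big(\vr^k_K-\vr^{k-1}_K\big)
-\frac{\TS^2}{2}\,\chi''(\xi^k_{\vt,h})\,\vr^{k-1}_K\,|D_t\vt^k_K|^2 .
\end{multline*}
Dividing by $\TS$, multiplying by $c_v\phi_h$ and summing over $K$ turns $c_v\intO{D_t(\vrh^k\vth^k)\chi'(\vth^k)\phi_h}$ into the renormalized term $c_v\intO{D_t(\vrh^k\chi(\vth^k))\phi_h}$, the $\chi''$--in--time remainder $-\frac{c_v\TS}{2}\intO{\chi''(\xi^k_{\vt,h})\vrh^{k-1}|D_t\vth^k|^2\phi_h}$ of \eqref{eq_renormalized_energy}, and a non-conservative residual proportional to $D_t\vrh^k$, namely $c_v\intO{\big(\chi(\vth^k)-\vth^k\chi'(\vth^k)\big)D_t\vrh^k\phi_h}$, to be absorbed below.

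For the convective term I would split $F_h=Up-h^\eps\jump{\cdot}$. The pure upwind part is treated face by face using $Up[r_h,\vuh^k]=\Ov{r_h}\,\Ov{\vuh^k}\cdot\vc{n}-\frac12|\Ov{\vuh^k}\cdot\vc{n}|\jump{r_h}$, the product rule $\jump{\vrh^k\vth^k}=\Ov{\vrh^k}\jump{\vth^k}+\jump{\vrh^k}\Ov{\vth^k}$ from \eqref{product_rule}, and a second--order Taylor expansion of $\chi$ across each face $\sigma=K|L$, which introduces $\eta^k_{\vt,h}\in\co{\vt^k_K}{\vt^k_L}$. Because the upwind rule picks the outflow value, this generates, besides the renormalized convective flux $-c_v\sum_{\sigma\in\facesint}\intSh{Up(\vrh^k\chi(\vth^k),\vuh^k)\jump{\phi_h}}$, the dissipation remainder $\frac{c_v}{2}\sum_{\sigma\in\facesint}\intSh{\chi''(\eta^k_{\vt,h})\jump{\vth^k}^2(\vrh^k)^{\rm out}[\Ov{\vuh^k}\cdot\vc{n}]^-\phi_h}$ of \eqref{eq_renormalized_energy} and a non-conservative correction of the form $\sum_{\sigma\in\facesint}\intSh{Up(\vrh^k,\vuh^k)\jump{\big(\chi(\vth^k)-\vth^k\chi'(\vth^k)\big)\phi_h}}$. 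Using the discrete continuity equation \eqref{scheme_ns_den}, i.e.\ $\intO{D_t\vrh^k\,\phi_h}=\sum_{\sigma\in\facesint}\intSh{F_h(\vrh^k,\vuh^k)\jump{\phi_h}}$ with $\phi_h$ replaced by $\big(\chi(\vth^k)-\vth^k\chi'(\vth^k)\big)\phi_h$, the non-conservative residual from the time step is turned into a face sum over $F_h(\vrh^k,\vuh^k)$; combining it with the non-conservative correction, the $Up$--contributions cancel and only $F_h-Up=-h^\eps\jump{\vrh^k}$ remains, which is precisely the remainder $-c_vh^\eps\sum_{\sigma\in\facesint}\intSh{\jump{\vrh^k}\jump{\big(\chi(\vth^k)-\chi'(\vth^k)\vth^k\big)\phi_h}}$ of \eqref{eq_renormalized_energy}. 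Finally, the $-h^\eps\jump{\cdot}$ part of the energy flux tested against $\psi_h$ produces the last remainder $-c_vh^\eps\sum_{\sigma\in\facesint}\intSh{\jump{\vrh^k\vth^k}\jump{\chi'(\vth^k)\phi_h}}$, and assembling all the pieces yields \eqref{eq_renormalized_energy}.

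The delicate step is the face--wise renormalization of the upwind convective flux: one has to combine the product rule for jumps, the Taylor expansion of $\chi$ across the face, and the case distinction hidden in the upwind definition so as to produce the dissipation remainder with exactly the weight $(\vrh^k)^{\rm out}[\Ov{\vuh^k}\cdot\vc{n}]^-$ and the correct sign, and to arrange the precise cancellation of the non-conservative terms against the continuity equation while keeping track of both $h^\eps$ contributions. The time--derivative step and the treatment of the diffusion and source terms are, by contrast, routine.
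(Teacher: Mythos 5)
The paper itself gives no proof of this lemma; it only quotes it from \cite[Lemma 3.3]{HS_NSF}. Your argument is correct and is precisely the standard renormalization procedure used there and in the companion Lemma~\ref{lem_renormalized_density}: test \eqref{scheme_ns_ene} with $\chi'(\vth^k)\phi_h$, Taylor--expand $\chi$ in time cell by cell (producing $\xi^k_{\vt,h}$) and across each face in the upwind flux (producing $\eta^k_{\vt,h}$ and the dissipation term with weight $(\vrh^k)^{\rm out}[\Ov{\vuh^k}\cdot\vc{n}]^-$), and cancel the non--conservative residual $c_v\intO{\big(\chi(\vth^k)-\vth^k\chi'(\vth^k)\big)D_t\vrh^k\,\phi_h}$ against the discrete continuity equation so that only the two $h^\eps$ remainders survive.
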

Now, we are ready to derive the discrete entropy equation  for the numerical solution of  scheme \eqref{scheme_ns}. 

\begin{Lemma}[Entropy equation]\label{lem_entropy_stability}
Let $(\vrh, \vuh, \vth) $  be the solution of our finite volume scheme \eqref{scheme_ns} such that $\vr_h^k,$ $\vt_h^k>0$ for all $k=1,\ldots,N_T.$ Then,  for any $\phi_h \in Q_h$ it holds
\begin{multline}\label{eq_entropy_stability}
 \intO{ D_t \left(\vrh^k s_h^k \right) \phi_h  } -
\sum_{\sigma \in \facesint } \intSh{ Up(\vrh^k s_h^k, \vuh^k) \jump{\phi_h} }
+ \intO{\kappa \Gradedge\vth^k \cdot \Gradedge\!\!\left(\frac{\phi_h}{\vth^k}\right)}
\\  - \intO{ \left(2\mu |\Dhuhk|^2  + \lambda |\Divh \vuh^k|^2  \right) \frac{\phi_h}{\vth^k} }    =
\intO{\left( D_1 \phi_h   + D_2 \Ov{ \phi_h}  + D_3 \cdot \Gradedge \phi_h \right) },
\end{multline}
where
\begin{equation}\label{entropy_dissipation}
\begin{aligned}
 D_1 & :=  \frac{ \TS}{2 \xi^k_{\vr,h} } | D_t \vr^k_h |^2
+ \frac{h}{2 \eta^k_{\vr,h}}  |\Gradedge \vrh^k|^2    |\Ov{\vuh^k} \cdot \vn |
+ \frac{c_v \TS}{2 |\xi_{\vt,h}^k|^2 }  \vrh^{k-1} |D_t \vth^k|^2
- \frac{c_v h }{2 |\eta_{\vt,h}^k|^2} |\Gradedge\vth^k|^2
(\vrh^k)^{\rm out} [\Ov{\vuh^k} \cdot \vc{n}]^- ,
\\
D_2 &:=
h^{\eps+1}  \Gradedge \vrh^k \cdot  \Gradedge  \left(\nabla_\vr(-\vrh^k s_h^k) \right)
+  h^{\eps+1} \Gradedge p_h^k \cdot \Gradedge \left(\nabla_p(-\vrh^k s_h^k) \right)
,\\
D_3 &:= h^{\eps+1}   \Gradedge \vrh^k  \cdot \Ov{  \nabla_\vr(-\vrh^k s_h^k) }
+ h^{\eps+1} \Gradedge p_h^k  \cdot  \Ov{\nabla_p(-\vrh^k s_h^k)},
\end{aligned}
\end{equation}
and
 $\xi^k_{\vr,h},$ $\eta^k_{\vr,h}$ and $\xi^k_{\vt,h},$ $\eta^k_{\vt,h}$ are given in Lemmas~\ref{lem_renormalized_density} and  \ref{lem_renormalized_energy}, respectively. Moreover, $D_1,$ $D_2 \geq 0.$
\end{Lemma}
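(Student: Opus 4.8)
The plan is to obtain \eqref{eq_entropy_stability} by adding two copies of the renormalised balances of Lemmas~\ref{lem_renormalized_density} and \ref{lem_renormalized_energy}, specialised to a carefully chosen pair of renormalising functions. Since $s_h^k=s(\vrh^k,\vth^k)=c_v\log\vth^k-\log\vrh^k$, we have $\vrh^k s_h^k=c_v\,\vrh^k\log\vth^k-\vrh^k\log\vrh^k$, so I would take $\chi(\vt)=\log\vt$ in Lemma~\ref{lem_renormalized_energy} (hence $\chi'(\vt)=1/\vt$, $\chi''(\vt)=-1/\vt^2$, $\chi(\vt)-\chi'(\vt)\vt=\log\vt-1$) and $B(\vr)=-\vr\log\vr$ in Lemma~\ref{lem_renormalized_density} (hence $B'(\vr)=-\log\vr-1$, $B''(\vr)=-1/\vr$, $B'(\vr)\vr-B(\vr)=-\vr$), and then sum the two identities. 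Using the linearity of $D_t$ and of $Up[\cdot,\vuh^k]$ in the transported argument, the leading terms collapse to $\intO{D_t(\vrh^k s_h^k)\phi_h}-\sum_{\sigma\in\facesint}\intSh{Up(\vrh^k s_h^k,\vuh^k)\jump{\phi_h}}$. From Lemma~\ref{lem_renormalized_energy} the heat-flux term is $\sum_{\sigma\in\facesint}\intSh{\frac{\kappa}{h}\jump{\vth^k}\jump{\phi_h/\vth^k}}$, which equals $\intO{\kappa\Gradedge\vth^k\cdot\Gradedge(\phi_h/\vth^k)}$ after rewriting face jumps as edge gradients (recall $\jump{w_h}=h\,\Gradedge w_h$ on each dual cell and $|D_\sigma|=h|\sigma|$), and the viscous production $\intO{(2\mu|\Dhuhk|^2+\lambda|\Divh\vuh^k|^2)\phi_h/\vth^k}$ is moved to the left-hand side, producing the minus sign displayed in \eqref{eq_entropy_stability}.

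Next I would collect the remaining pieces. The pressure-work terms cancel: Lemma~\ref{lem_renormalized_density} contributes $\intO{\phi_h(B'(\vrh^k)\vrh^k-B(\vrh^k))\Divh\vuh^k}=-\intO{\vrh^k\phi_h\Divh\vuh^k}$ on the left, while the term $-\intO{p_h^k\Divh\vuh^k\,\phi_h/\vth^k}$ of Lemma~\ref{lem_renormalized_energy} equals $-\intO{\vrh^k\phi_h\Divh\vuh^k}$ (as $p=\vr\vt$), so moving the latter to the left cancels the former. The four renormalisation remainders (two from the backward-difference Taylor expansions, two from the upwind Taylor expansions of $B$ and $\chi$) assemble into $\intO{D_1\phi_h}$; here one inserts $B''(\xi^k_{\vr,h})=-1/\xi^k_{\vr,h}$, $\chi''(\xi^k_{\vt,h})=-1/|\xi^k_{\vt,h}|^2$, $\chi''(\eta^k_{\vt,h})=-1/|\eta^k_{\vt,h}|^2$, and once more $\jump{w_h}^2=h^2|\Gradedge w_h|^2$. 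Nonnegativity of $D_1$ is then immediate, since $\TS>0$, $\vrh^{k-1}>0$, all the intermediate values $\xi^k_{\vr,h},\eta^k_{\vr,h},\xi^k_{\vt,h},\eta^k_{\vt,h}$ are positive (they lie between positive states), and $-[\Ov{\vuh^k}\cdot\vc{n}]^-\ge 0$.

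It then remains to identify the $\mathcal O(h^{\eps})$ numerical-diffusion contributions. There are exactly three: from Lemma~\ref{lem_renormalized_density}, $-h^\eps\sum_{\sigma\in\facesint}\intSh{\jump{\vrh^k}\jump{B'(\vrh^k)\phi_h}}=h^\eps\sum_{\sigma\in\facesint}\intSh{\jump{\vrh^k}\jump{(\log\vrh^k+1)\phi_h}}$, and from Lemma~\ref{lem_renormalized_energy}, $-c_vh^\eps\sum_{\sigma\in\facesint}\intSh{\jump{\vrh^k}\jump{(\log\vth^k-1)\phi_h}}$ and $-c_vh^\eps\sum_{\sigma\in\facesint}\intSh{\jump{\vrh^k\vth^k}\jump{\phi_h/\vth^k}}$. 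Adding the two $\jump{\vrh^k}$-terms and using $\log\vr+1-c_v(\log\vt-1)=c_v+1-s$ together with \eqref{rho_s_convex} turns them into $h^\eps\sum_{\sigma\in\facesint}\intSh{\jump{\vrh^k}\jump{\nabla_\vr(-\vrh^k s_h^k)\phi_h}}$, while the third term equals $h^\eps\sum_{\sigma\in\facesint}\intSh{\jump{p_h^k}\jump{\nabla_p(-\vrh^k s_h^k)\phi_h}}$ because $\nabla_p(-\vr s)=-c_v/\vt$ and $p_h^k=\vrh^k\vth^k$. Splitting each $\jump{(\cdot)\phi_h}$ by the product rule \eqref{product_rule} into $\Ov{(\cdot)}\jump{\phi_h}+\jump{(\cdot)}\Ov{\phi_h}$ and converting jumps to edge gradients as above produces exactly $\intO{D_2\Ov{\phi_h}+D_3\cdot\Gradedge\phi_h}$ with $D_2,D_3$ as in \eqref{entropy_dissipation}. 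Finally, $D_2\ge0$ follows from convexity of $g:=(\vr,p)\mapsto-\vr s$: on each dual cell $D_\sigma$ one has $D_2=h^{\eps-1}\big(\nabla g((\vrh^k)^{\rm out},(p_h^k)^{\rm out})-\nabla g((\vrh^k)^{\rm in},(p_h^k)^{\rm in})\big)\cdot\big((\vrh^k)^{\rm out}-(\vrh^k)^{\rm in},\,(p_h^k)^{\rm out}-(p_h^k)^{\rm in}\big)\ge0$ by monotonicity of the gradient of a convex function.

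I expect the main obstacle to be the bookkeeping in the last two steps rather than any conceptual difficulty: one has to track the powers of $h$ and the factors $|\sigma|$, $|D_\sigma|$ when passing between sums over faces, sums over dual cells, and integrals over $\Omega$, and to verify that the Taylor remainders of $B$ and $\chi$ distribute precisely as the four terms of $D_1$. The conceptual ingredients — the choice $\chi=\log$, $B=-\vr\log\vr$, the chain-rule identities \eqref{rho_s_convex}, and the convexity of $-\vr s$ used for the sign of $D_2$ — are straightforward; the rest is algebra with the discrete product and chain rules.
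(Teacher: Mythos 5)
Your proposal is correct and follows essentially the same route as the paper: the paper takes $B(\vr)=\vr\log\vr$ in Lemma~\ref{lem_renormalized_density} and \emph{subtracts} the result from the $\chi=\log$ renormalized energy equation, which is identical to your choice $B(\vr)=-\vr\log\vr$ followed by addition. All the remaining steps — cancellation of the pressure-work terms via $p=\vr\vt$, assembly of the Taylor remainders into $D_1$, the use of \eqref{rho_s_convex} and the product rule \eqref{product_rule} to produce $D_2$ and $D_3$, and the monotonicity-of-the-gradient argument for $D_2\geq 0$ — match the paper's proof.
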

\begin{proof}
Firstly, setting $B(\vr)= \vr \log(\vr)$ in the renormalized density equation (\ref{renormalized_density}) implies
\begin{multline}\label{total_ener2}
 \intO{ D_t \left(  \vr^k_h \log(\vrh^k) \right) \phi_h}
 - \sum_{ \sigma \in \facesint } \intSh{  Up [ \vrh^k \log(\vrh^k), \vuh^k ] \jump{ \phi_h } }
 + \intO{  \vrh^k \Divh \vuh^k \phi_h }
\\ =
 - \intO{ \frac{ \TS}{2 \xi^k_{\vr,h} }  | D_t \vr^k_h |^2 \phi_h }
- \sum_{ \sigma \in \facesint } \intSh{ \frac{\phi_h}{2 \eta^k_{\vr,h}} \jump{ \vrh^k }^2    |\Ov{\vuh^k} \cdot \vn | }
- h^\eps \sum_{ \sigma \in \facesint } \intSh{ \jump{ \vrh^k} \jump{\left(\log(\vrh^k)+1\right) \phi_h } }.
\end{multline}
Next, we set  $\chi(\vt)=\log(\vt)$ in  \eqref{eq_renormalized_energy} to get 
\begin{equation}\label{eq_renormalized_energy2}
\begin{aligned}
 c_v & \intO{ D_t \left(\vrh^k \log(\vth^k) \right) \phi_h  } -
c_v\sum_{\sigma \in \facesint } \intSh{ Up (\vrh^k \log(\vth^k), \vuh^k) \jump{\phi_h} }
+\sum_{ \sigma \in \facesint } \intSh{ \frac{\kappa}{h} \jump{\vth^k} \jump{\frac{\phi_h}{\vth^k} } }
\\= &
\intO{ \left(2\mu |\Dhuhk|^2  + \lambda |\Divh \vuh^k|^2   -p(\vrh^k) \Divh \vuh^k \right) \frac{\phi_h}{\vth^k} }
+ \frac{c_v \TS}{2} \intO{  \vrh^{k-1} \biggabs{\frac{D_t \vth^k}{\xi_{\vt,h}^k}}^2 \phi_h }
\\ &
- \frac{c_v}{2} \sum_{ \sigma \in \facesint } \intSh{
\biggabs{\frac{\jump{\vth^k}}{\eta_{\vt,h}^k}}^2
(\vrh^k)^{\rm out} [ \Ov{\vuh^k} \cdot \vc{n}]^- }
- c_v h^\eps \sum_{ \sigma \in \facesint } \intSh{ \jump{\vrh^k} \jump{  \left( \log(\vth^k)-1\right) \phi_h} }
\\ &    - c_v h^{\eps}  \sum_{ \sigma \in \facesint } \intSh{ \jump{ \vrh^k\vt_h^k } \jump{\frac{\phi_h}{\vt_h^k}}  }.
\end{aligned}
\end{equation}
 Subtracting \eqref{total_ener2} from \eqref{eq_renormalized_energy2} 
yields
\begin{equation*}
\begin{split}
& \intO{ D_t \left(\vrh^k s_h^k \right) \phi_h  } -
\sum_{\sigma \in \facesint } \intSh{ Up(\vrh^k s_h^k, \vuh^k) \jump{\phi_h} }
\\ & \qquad
+\sum_{ \sigma \in \facesint } \intSh{ \frac{\kappa}{h} \jump{\vth^k} \jump{\frac{\phi_h}{\vth^k} } }
- \intO{ \left(2\mu |\Dhuhk|^2  + \lambda |\Divh \vuh^k|^2  \right) \frac{\phi_h}{\vth^k} }
\\& =
 \intO{ \frac{ \TS}{2 \xi^k_{\vr,h} }  | D_t \vr^k_h |^2 \phi_h }
+ \sum_{ \sigma \in \facesint } \intSh{ \frac{\phi_h}{2 \eta^k_{\vr,h}} \jump{ \vrh^k }^2    |\Ov{\vuh^k} \cdot \vn | }
\\ & \qquad
+ \frac{c_v \TS}{2} \intO{  \vrh^{k-1} \biggabs{\frac{D_t \vth^k}{\xi_{\vt,h}^k}}^2 \phi_h }
- \frac{c_v}{2} \sum_{ \sigma \in \facesint } \intSh{
\biggabs{\frac{\jump{\vth^k}}{\eta_{\vt,h}^k}}^2
(\vrh^k)^{\rm out} [ \Ov{\vuh^k} \cdot \vc{n}]^- \phi_h }
\\ &\qquad
+ h^\eps \sum_{ \sigma \in \facesint } \intSh{ \jump{ \vrh^k} \jump{\left(\log(\vrh^k)+1 -c_v\log(\vth^k)+c_v\right) \phi_h } }
  - c_v h^{\eps}  \sum_{ \sigma \in \facesint } \intSh{ \jump{ \vrh^k\vt_h^k } \jump{\frac{\phi_h}{\vt_h^k}}  }.
\end{split}
\end{equation*}
 We finish the derivation of \eqref{eq_entropy_stability} by applying the product rule \eqref{product_rule} on  the last two terms,  rewritten in a convenient way using the identities \eqref{rho_s_convex} and the notation of the discrete operator \eqref{grad_edge}, such that
\begin{align*}
  h^\eps  &\sum_{ \sigma \in \facesint } \intSh{ \jump{ \vrh^k} \jump{ \left( c_v+1-s_h^k \right)\phi_h}} + h^\eps  \sum_{ \sigma \in \facesint } \intSh{ \jump{ \vrh^k \vt_h^k} \jump{\left(-\frac{c_v}{\vt_h^k}\right)\phi_h}} \\
 & =  h^\eps  \sum_{ \sigma \in \facesint } \intSh{ \jump{ \vrh^k} \jump{\nabla_{\vr}(-\vr_h^k s_h^k)\phi_h}} + h^\eps  \sum_{ \sigma \in \facesint } \intSh{ \jump{ \vrh^k \vt_h^k} \jump{\nabla_p(-\vr_h^k s_h^k)\phi_h}} \\
&= h^{\eps+1} \intO{  \Gradedge \vrh^k \cdot  \Gradedge  \left(\nabla_\vr(-\vrh^k s_h^k) \phi_h\right)}
+  h^{\eps+1} \intO{\Gradedge p_h^k \cdot \Gradedge \left(\nabla_p(-\vrh^k s_h^k)\phi_h \right)}\\
& =\intO{\left(   D_2 \Ov{ \phi_h}  + D_3 \cdot \Gradedge \phi_h \right) } .
\end{align*}
The term $D_1$ is obviously non--negative, and by the convexity of the entropy $-\vr s(\vr,p)$ we can conclude that the term $D_2$ is  non--negative as well. Indeed, gradient of any convex sufficiently smooth function is a monotone map.
\end{proof}

\subsection{Discrete entropy inequality}
The discrete entropy inequality is now a direct consequence of Lemma~\ref{lem_entropy_stability}. Indeed,  we set $\phi_h=1$  in the entropy equality \eqref{eq_entropy_stability}  and get
\begin{equation}\label{est_entropy}
\begin{split}
\intO{ D_t \left(\vrh^k s_h^k \right)   }  &=
- \intO{\kappa \Gradedge\vth^k \cdot  \Gradedge\!\!\left(\frac{1}{\vth^k}\right)}
+ \intO{ \frac{1}{\vth^k}\left(2\mu |\Dhuhk|^2  + \lambda |\Divh \vuh^k|^2  \right)  }   +\calDh,
\end{split}
\end{equation}
where $\displaystyle \calDh = \intO{D_1+D_2} \geq 0$ represents the numerical entropy production, cf. \eqref{entropy_dissipation}. 
 The first two terms in \eqref{est_entropy} standing for the discrete counterpart of the physical entropy production are obviously non--negative.
 To exploit some useful estimates from the entropy production,  it is crucial to keep the discrete entropy bounded. To this end we assume the following uniform bounds on the density  and temperature:
\begin{subequations}\label{assumptions}
\begin{equation}\label{rho_assump}
\text{(A1)} \quad 0< \underline{\vr} \leq  \vrh \leq \bar{\vr} \ \ \mbox{ uniformly for all } h \to 0 ,
\end{equation}
\begin{equation}\label{temperature_assump}
\text{(A2)} \quad 0< \underline{\vt} \leq  \vth \leq \bar{\vt} \ \ \mbox{ uniformly for all } h \to 0 .
\end{equation}
\end{subequations}
Clearly,  the  assumptions (A1) and (A2) imply
\begin{equation}\label{entropy_bounds}
 \underline{s} \leq  s_h \leq \bar{s} \ \ \mbox{ uniformly for all } h \to 0.
\end{equation}

\subsection{Second a priori estimates}
 In what follows we   derive  the second {\sl a priori} estimates from the  energy equation and the entropy inequality. 
Firstly,  from the energy equation \eqref{energy_stability}, under the assumptions \eqref{assumptions},  we directly get  the following estimates
\begin{subequations}\label{apriori_2.1}
\begin{equation}\label{est1}
  h^\eps \int_0^T \sum_{ \sigma \in \facesint } \intSh{   \jump{\vuh}^2 } \aleq 1,
\end{equation}
\begin{equation}\label{est2}
  \int_0^T  \sum_{ \sigma \in \facesint } \intSh{  |\Ov{\vuh} \cdot \vc{n} |\jump{ \vuh } ^2   } \aleq 1.
\end{equation}
\end{subequations}
Secondly, the entropy inequality \eqref{est_entropy} and the assumptions \eqref{assumptions} imply
\begin{subequations}\label{apriori_2.2}
\begin{equation}\label{est3}
 - \int_0^T \intO{ \Gradedge\vth \cdot \Gradedge\!\!\left(\frac{1}{\vth}\right)} \dt
\aleq 1 ,
\end{equation}
\begin{equation}\label{est4}
\int_0^T  \intO{ \left(2\mu |\Dhuh|^2  + \lambda |\Divh \vuh|^2  \right)} \dt \aleq 1 ,
\end{equation}
\begin{equation}\label{est6}
\int_0^T \sum_{ \sigma \in \facesint } \intSh{|\Ov{\vuh} \cdot \vn | \jump{  \vr_h}^2     }  \aleq 1,
\end{equation}
 and also
\begin{equation}\label{est5}
\int_0^T\intO{D_1 + D_2\,} \dt \aleq 1 .
\end{equation}

\noindent   Using Lemma \ref{sobolev_ineq} with   \eqref{N},  \eqref{est4} and (A1), we infer that
\begin{equation}\label{ul2l6}
\norm{ \vuh }_{L^2 L^6}   \aleq 1 .
\end{equation}
Further, applying \cite[Lemma 5.1]{FKN2016} with $F(\vt_h)=\vt_h,$ $G(\vt_h)=(\vt_h)^{-1},$ and (A2) we obtain
\begin{align*}
-\frac{\kappa}{h} \sum_{ \sigma \in \facesint } \intSh{  \jump{\vth^k} \jump{\frac{1}{\vth^k} } } \geq \frac{1}{4} \frac{\kappa}{h} \sum_{ \sigma \in \facesint } \intSh{ \left|\frac{\jump{\vt_h}}{\Ov{\vt_h}}\right|^2 }\ageq \frac{\kappa}{h} \sum_{ \sigma \in \facesint } \intSh{ \jump{\vt_h}^2},
\end{align*}
which combined with estimate \eqref{est3} gives the bound on the temperature gradient
\begin{align}\label{grad_vt}
\int_0^T\intO{|\Gradedge\vt_h|^2} \dt = \int_0^T  \sum_{ \sigma \in \facesint } \intSh{ \frac{\jump{\vt_h}^2}{h}} \dt \aleq 1.
\end{align}
\noindent Thanks to the assumptions \eqref{assumptions} we also have
\begin{equation}\label{est_D3}
\begin{aligned}
\int_0^T \intO{|D_3|} \dt  &\aleq
h^{\eps +1} \int_0^T  \sum_{ \sigma \in \facesint } \intSh{  \big|\jump{\vrh} \Ov{(c_v+1- s_h) } \big| } \dt  + h^{\eps +1} \int_0^T  \sum_{ \sigma \in \facesint } \intSh{ \left| \jump{\vr_h\vt_h}\Ov{\left( \frac{-c_v}{\vt_h}\right) }\right|}
\\& \aleq
h^{\eps +1} \int_0^T  \sum_{ \sigma \in \facesint } \intSh{ \abs{\jump{\vrh}}  + \left| \jump{\vr_h\vt_h}\right| } \dt  \aleq h^{\eps +1}  \int_0^T  \sum_{ \sigma \in \facesint } \intSh{  \Ov{\vrh} + \Ov{\vrh\vt_h}  }   \dt\\ &
 \aleq h^{\eps },
\end{aligned}
\end{equation}
where we have used the fact that $ | \jump{r_h} | \leq 2 \Ov{r_h}$  for all $ r_h\geq 0.$
\end{subequations}

\section{Consistency} \label{sec_consistency}
 In this section, our aim is to show the consistency of the discrete continuity and momentum equations \eqref{scheme_ns_den} - \eqref{scheme_ns_mom}, and the discrete entropy equation \eqref{eq_entropy_stability}, i.e. that there exist  $\beta_i>0,$ $i=1, 2, 3,$ such that the numerical solution for  $h \to 0$ satisfies 
\begin{align*}
- \intO{ \vrh^0 \phi(0,\cdot) }  =&
\int_0^\tau \intO{ \left[ \vrh \partial_t \phi + \vrh \vuh \cdot \Grad \phi \right]} \dt  +  \order(h^{\beta_1}), 
\\
- \intO{ \vrh^0 \vuh^0 \bfphi(0,\cdot) }  =&
\int_0^T \intO{ \left[ \vrh \vuh \cdot \partial_t \bfphi + \vrh \vuh \otimes \vuh  : \Grad \bfphi  + p_h \Div \bfphi \right]} \dt,
\\
 - 2  \mu  &  \int_0^T \intO{  \Dhuh : \mathbf D( \bfphi)}  \dt
 -  \lambda \int_0^T \intO{ \Divh \vuh\, \Div \bfphi}\dt  +  \order(h^{\beta_2}),\\
- \intO{ \vrh^0 s_h^0 \phi(0,\cdot) }  =
&\int_0^T\intO{ [ \vrh s_h \pd_t\phi + \vrh  s_h \vuh \cdot \Grad \phi ] }
-  \int_0^T\intO{ \kappa \Gradedge \vth \cdot \left(\frac{1}{\vth}\Grad \phi +\phi \Gradedge \left(\frac{1}{\vth}\right) \right) }
\\ + &\int_0^T\intO{ \left(2\mu |\Dhuh|^2  + \lambda |\Divh \vuh|^2  \right) \frac{ \phi}{\vth} }
+\int_0^T\intO{(D_1 +D_2)\phi} \dt +  \order(h^{\beta_3}),
\end{align*}
for all test functions $\phi\in C^2([0,T] \times  \Omega),$   $\bfphi  \in C^2([0,T] \times  \Omega; \mathbb R^d)$  with $\phi(T)=0=\bfphi(T)$,   and $D_1 , D_2 \geq 0$ given in Lemma~\ref{lem_entropy_stability}.

To this end we proceed  with each term step by step and estimate the consistency errors. We choose the corresponding piecewise constant test functions $\Pim \phi$ and $\Pim \bfphi$ in equations \eqref{scheme_ns_den}, \eqref{eq_entropy_stability} and \eqref{scheme_ns_mom}, respectively.  
For convenience, hereafter we use  $r_h$ for either $\vrh$, $\vrh \uih$ or $\vrh s_h$,  and also $\Pim \phi$ for $\Pim \phi_i,$ $i=1,\ldots,d$.

\subsection{Step 1 -- time derivative terms} The time derivative term can be rewritten as
\begin{equation*}
\begin{aligned}
&\int_0^T \intO{ D_t r_h \Pim \phi } \dt =
\int_0^T \intO{ \frac{r_h(t)- r_h(t-\Delta t) } {\Delta t} \phi(t) } \dt
\\ &
= \frac{1}{\TS}\int_0^T \intO{ r_h(t) \phi(t) } \dt -
\frac{1}{\Delta t}\int_{-\Delta t}^{T-\Delta t} \intO{ r_h(t) \phi(t+ \Delta t) } \dt
\\ &=  - \int_0^T \intO{ r_h(t) D_t \phi(t) } \dt
+ \frac{1}{\Delta t}\int_{T-\Delta t}^{T} \intO{ r_h(t) \phi(t+ \Delta t) } \dt
 - \frac{1}{\Delta t}\int_{-\Delta t}^{0} \intO{ r_h(t) \phi(t+ \Delta t) } \dt
\\ & = - \int_0^T \intO{ r_h(t) \left(\pd_t \phi (t) + \frac{ \TS}{2} \pd_t^2 \phi(t^*) \right)} \dt
- \intO{ r_h^0 \phi (0) }, \text{ for a suitable } t^*.
\end{aligned}
\end{equation*}
Using {\sl a priori} estimates \eqref{N} and \eqref{entropy_bounds}, we derive for $r_h$ being $\vrh$,   $\vrh \uih$ and $\vrh s_h$ that
\begin{equation*}\label{consistency_timederivative}
\begin{aligned}
&\int_0^T \intO{ D_t r_h \Pim \phi } \dt  + \int_0^T \intO{ r_h(t) \pd_t \phi (t) } \dt
+ \intO{ r_h^0 \phi (0) }
\aleq  \TS \norm{r_h}_{L^1L^1} \norm{\phi}_{C^2} \aleq h.
\end{aligned}
\end{equation*}

\subsection{Step 2 -- convective terms}
To deal with the convective terms, it is convenient to recall  the identity from Lemma \ref{lem_convective_trans},
\begin{align*}
\int_0^T \intO{  r_h \vuh \cdot \Grad \phi  } \dt  - \int_0^T \sum_{\sigma \in \facesint} \intSh{ F_h  [ r_h,\vuh ] \jump{  \Pim \phi}}  \dt =\sum_{j=1}^4 E_j(r_h),
\end{align*}
where the error terms can be bounded using the interpolation error estimates \eqref{n4c} and \eqref{n4c2} as follows
\begin{equation*}
\begin{aligned}
& E_1(r_h)= \frac12 \int_0^T \sum_{\sigma \in \facesint} \intSh{ |\Ov{\vuh} \cdot \vc{n}| \jump{r_h }  \jump{ \Pim \phi}  }  \dt
\lesssim   h \norm{\phi}_{C^1}  \int_0^T \sum_{\sigma \in \facesint} \intSh{  |\Ov{\vuh} \cdot \vc{n}|\,| \jump{r_h } | }  \dt
\\& E_2(r_h)= \frac14  \int_0^T \sum_{\sigma \in \facesint} \intSh{ \jump{\vuh} \cdot \vc{n}   \jump{r_h }  \jump{  \Pim \phi}  }  \dt
\lesssim  h   \norm{\phi}_{C^1}  \int_0^T \sum_{\sigma \in \facesint} \intSh{ | \jump{\vuh} \cdot \vc{n}   \jump{r_h } | }  \dt
\\& E_3(r_h)= \int_0^T \intO{  r_h \vuh \cdot \Big(\Grad \phi - \Gradh  \big( \Pim \phi\big) \Big) } \dt
\lesssim  h \norm{\phi}_{C^2} \int_0^T \intO{ | r_h \vuh |} \dt
\\& E_4(r_h)=
 \muh  \int_0^T \sum_{\sigma \in \facesint} \intSh{   \jump{r_h }  \jump{  \Pim \phi}  }  \dt
\lesssim     h^{\eps+1}  \norm{\phi}_{C^1}  \int_0^T \sum_{\sigma \in \facesint} \intSh{ |   \jump{r_h } | }  \dt.
\end{aligned}
\end{equation*}

\subsubsection*{Error terms $E_1(r_h)$}
Firstly, by setting $r_h=\vrh$ in $E_1(r_h)$ we derive
\begin{equation}\label{N3}
\begin{aligned}
 E_1(\vrh) & \aleq  h  \norm{\phi}_{C^1}  \int_0^T \sum_{\sigma \in \facesint} \intSh{  |\Ov{\vuh} \cdot \vc{n}|\,| \jump{\vrh } |}  \dt
\\& \lesssim h  \left(  \int_0^T \sum_{\sigma \in \facesint} \intSh{ |\Ov{\vuh} \cdot \vc{n}  | }  \dt  \right)^{1/2}
 \left( \int_0^T \sum_{\sigma \in \facesint} \intSh{ |\Ov{\vuh} \cdot \vc{n}| \jump{\vrh }^2 }  \dt  \right)^{1/2}
 \\&  \aleq  h^{1/2} \norm{\vuh}_{L^1L^1}^{1/2}   \aleq   h^{1/2} \norm{\vuh}_{L^2L^6}^{1/2} \aleq  h^{1/2}  ,
\end{aligned}
\end{equation}
where we have used the H\"older inequality with the estimates \eqref{est6} and \eqref{ul2l6}.
\\
Secondly, for $r_h=\vrh \uih$ we  control the consistency error  $E_1(r_h)$ as follows
\begin{equation*}
\begin{aligned}
 E_1(\vrh \uih) &\aleq  h   \norm{\phi}_{C^1}  \int_0^T \sum_{\sigma \in \facesint} \intSh{  |\Ov{\vuh} \cdot \vc{n}|\, | \jump{\vrh \uih} | }  \dt
\\&
\lesssim h   \int_0^T \sum_{\sigma \in \facesint} \intSh{  |\Ov{\vuh} \cdot \vc{n}|\, | \jump{\vrh }|\, |  \Ov{\uih} | }  \dt
+ h   \int_0^T \sum_{\sigma \in \facesint} \intSh{  |\Ov{\vuh} \cdot \vc{n}|\, | \jump{ \uih}|\, | \Ov{\vrh}| }  \dt
\\&
\lesssim h  \left(  \int_0^T \sum_{\sigma \in \facesint} \intSh{ |\Ov{\vuh} |^3 }  \dt  \right)^{1/2}
 \left( \int_0^T \sum_{\sigma \in \facesint} \intSh{ |\Ov{\vuh} \cdot \vc{n}| \jump{\vrh }^2 }  \dt  \right)^{1/2}
\\&
+ h  \left(  \int_0^T \sum_{\sigma \in \facesint} \intSh{  |\Ov{\vuh}|^2  }  \dt  \right)^{1/2}
 \left( \int_0^T \sum_{\sigma \in \facesint} \intSh{ \jump{\vuh }^2 }  \dt  \right)^{1/2}
\\ & \aleq   h^{1/2}  +  h^{\frac{1-\eps}{2}} .
 \end{aligned}
\end{equation*}
Here we have used the H\"older inequality, product rule \eqref{product_rule}, the estimates  \eqref{est1}, \eqref{est6},  and the interpolation inequality
\[
\norm{\vuh}_{L^3 L^3} \aleq \norm{\vuh}_{L^\infty L^2} ^{1/2}  \norm{\vuh}_{L^2 L^6}^{1/2}   \aleq 1
\]
with \eqref{N}, \eqref{ul2l6}  and (A1). 
\\
Note that for any $f \in C^1((0,\infty))$  there exists $z_h^* \in \co{z_h^{\rm out}}{z_h^{\rm in}}$
such that the following  estimate holds
\begin{equation} \label{N4}
|\jump{f(z_h)}| =| \nabla f(z_h^*) \jump{z_h} |  \lesssim  | \jump{z_h}|.
\end{equation}
Hence, setting $r_h=\vrh s_h$ in $E_1(r_h)$ and using \eqref{N4} with
 $f(z_h)=\log(z_h),$ $z_h \in \{\vr_h, \vt_h\},$ we finally get
\begin{equation*}
\begin{aligned}
 E_1(\vrh s_h) &\aleq  h  \norm{\phi}_{C^1} \int_0^T \sum_{\sigma \in \facesint} \intSh{  |\Ov{\vuh} \cdot \vc{n}|\, |\jump{\vrh s_h} | }  \dt
\\&
\lesssim h   \int_0^T \sum_{\sigma \in \facesint} \intSh{  |\Ov{\vuh} \cdot \vc{n}|\, | \jump{\vrh } |\,|\Ov{s_h} | }  \dt
+ h   \int_0^T \sum_{\sigma \in \facesint} \intSh{  |\Ov{\vuh} \cdot \vc{n}|\,| \jump{ c_v \log \vth - \log \vrh}|\,| \Ov{\vrh}| }  \dt
\\&
\aleq h   \int_0^T \sum_{\sigma \in \facesint} \intSh{  |\Ov{\vuh} \cdot \vc{n}|\, | \jump{\vrh }  | }  \dt+  h   \int_0^T \sum_{\sigma \in \facesint} \intSh{ |\Ov{\vuh} \cdot \vc{n}|  \left( |\jump{ \vth}|  + |\jump{\vrh}| \right) }  \dt
\\&
 \aleq 2 h   \int_0^T \sum_{\sigma \in \facesint} \intSh{  |\Ov{\vuh} \cdot \vc{n}|\, | \jump{\vrh }  | }  \dt
 + h  \left(  \int_0^T \sum_{\sigma \in \facesint} \intSh{  |\Ov{\vuh}|^2   }  \dt  \right)^{1/2}
 \left( \int_0^T \sum_{\sigma \in \facesint} \intSh{ \jump{  \vth }^2   }  \dt  \right)^{1/2} 
\\&
 \aleq h^{1/2}
 + h \norm{ \vuh}_{L^2L^2}  \aleq  h^{1/2} ,
\end{aligned}
\end{equation*}
where
we have again used  the product rule \eqref{product_rule}, the H\"older inequality,  and the estimates \eqref{N3},    \eqref{est6}, \eqref{grad_vt} and \eqref{N} with the assumptions \eqref{entropy_bounds} and (A1).

\subsubsection*{Error terms $E_2(r_h)$}
To deal with  the second error terms  we first set $r_h = \vrh$ and obtain
\begin{equation*}
\begin{aligned}
 E_2(\vrh) \aleq h \norm{\phi}_{C^1}
  \left(  \int_0^T \sum_{\sigma \in \facesint} \intSh{  \jump{\vuh}^2   }  \dt  \right)^{1/2}
  \left(  \int_0^T \sum_{\sigma \in \facesint} \intSh{  \jump{\vrh}^2   }  \dt  \right)^{1/2}
  \aleq h^{\frac{1-\eps}{2}}
\end{aligned}
\end{equation*}
due to \eqref{est1} and \eqref{rho_assump}.
\\
Then,  inserting $r_h = \vrh \uih$ into $E_2(r_h)$  and  taking into account the estimates  \eqref{N}, \eqref{est1} with (A1) we get in an analogous way as before
\begin{equation*}
\begin{aligned}
 E_2(\vrh \uih) &\aleq h \norm{\phi}_{C^1}
     \int_0^T \sum_{\sigma \in \facesint} \intSh{ \big| \jump{\vuh} \cdot \vc{n} \big(  \jump{\vrh} \Ov{\uih }   + \Ov{\vrh} \jump{\uih }  \big)  \big|}  \dt
\\&
\aleq   h   \int_0^T \sum_{\sigma \in \facesint} \intSh{  |\jump{\vrh}|\, | \jump{\vuh }\cdot \vc{n}| \, | \Ov{\vuh}|   }  \dt
+h   \int_0^T \sum_{\sigma \in \facesint} \intSh{ \Ov{\vrh} \jump{\vuh } ^2  }  \dt
\\&
\aleq h  \left(  \int_0^T \sum_{\sigma \in \facesint} \intSh{  \jump{\vuh}^2 }\dt\right)^{1/2} \left( \int_0^T \sum_{\sigma \in \facesint} \intSh{  | \Ov{\vuh}|^2 }  \dt  \right)^{1/2}
 +h^{ 1-\eps}
 \\&
 \aleq  h^{1-\frac 1 2 -\frac{\eps}{2}}\norm{\vuh}_{L^2L^2} +h^{ 1-\eps} \aleq h^{\frac{1-\eps}{2}} + h^{1-\eps}.
 \end{aligned}
\end{equation*}
Finally,  for $r_h = \vrh s_h$ we deduce, by
  \eqref{N4} with $f(z_h)=\log(z_h),$ $z_h \in \{\vr_h, \vt_h\},$ and \eqref{est1}, \eqref{grad_edge}, (A1),  the bound
\begin{equation*}
\begin{aligned}
 E_2( \vrh s_h) &\aleq  h \norm{\phi}_{C^1}
      \int_0^T \sum_{\sigma \in \facesint} \intSh{ \big| \jump{\vuh} \cdot \vc{n} \big(  \jump{\vrh } \Ov{s_h }   + \Ov{\vrh} \jump{s_h }  \big)  \big|}  \dt
\\& \aleq
 h   \int_0^T \sum_{\sigma \in \facesint} \intSh{ |\jump{\vuh} \cdot \vc{n}|  \big( |\jump{\vrh}| + | \jump{ c_v \log \vth - \log \vrh} \Ov{\vrh}| \big)  }  \dt
\\& \aleq
 h  \left(  \int_0^T \sum_{\sigma \in \facesint} \intSh{ \jump{\vuh} ^2  }  \dt  \right)^{1/2} \Bigg[
 \left( \int_0^T \sum_{\sigma \in \facesint} \intSh{  \jump{\vrh }^2 }  \dt  \right)^{1/2} + \left( \int_0^T \sum_{\sigma \in \facesint} \intSh{ \jump{  \vth }^2   }  \dt  \right)^{1/2}
\Bigg]
\\& \aleq  h^{\frac{1-\eps}{2}}  + h^{\frac{3-\eps}{2}}.
\end{aligned}
\end{equation*}

\subsubsection*{Error terms $E_3(r_h)$}
 The estimates of the third error terms are straightforward due to  \eqref{N}, \eqref{assumptions} and \eqref{entropy_bounds}. Indeed,
\begin{equation*}
\begin{aligned}
 E_3(\vrh) & \aleq  h \norm{\phi}_{C^2} \int_0^T \intO{ | \vrh \vuh |} \dt \aleq h \norm{\vuh}_{L^2L^2} \aleq h ,
\\ E_3(\vrh \uih) & \aleq h \norm{\phi}_{C^2} \int_0^T \intO{ | \vrh \uih \vuh |} \dt \aleq h \norm{\vrh |\vuh|^2}_{L^\infty L^1} \aleq h ,
\\ E_3(\vrh s_h) & \aleq  h \norm{\phi}_{C^2} \int_0^T \intO{ | \vrh s_h \vuh |} \dt  \aleq h \norm{\vuh}_{L^2L^2} \aleq h.
\end{aligned}
\end{equation*}
\subsubsection*{Error terms $E_4(r_h)$}
Finally, we treat  the fourth error terms.
For $r_h =\vrh$ the argumentation is simple and analogous as above. For $r_h=\vrh s_h$ the term is not present, i.e. $E_4(\vrh s_h)=0$. Thus we only concentrate on a slightly more involved estimate for $r_h=\vrh \uih$,
\begin{equation*}
\begin{aligned}
 E_4(\vrh \uih) & \aleq     h^{\eps+1}  \norm{\phi}_{C^1}  \int_0^T \sum_{\sigma \in \facesint} \intSh{ |   \jump{\vrh \uih } | }  \dt
\aleq    h^{\eps+1}    \int_0^T \sum_{\sigma \in \facesint} \intSh{ |   \jump{\vrh  }   \Ov{\uih}|   +  |  \Ov{\vrh} \jump{ \uih } | }  \dt
\\& \aleq
 h^{\eps+1}  \left(  \int_0^T \sum_{\sigma \in \facesint} \intSh{   |\Ov{\vuh}|^2  }  \dt  \right)^{1/2}
\left(  \int_0^T \sum_{\sigma \in \facesint} \intSh{   \jump{ \vrh }^2  }  \dt  \right)^{1/2}
\\
& +
h^{\eps+1}   \left(  \int_0^T \sum_{\sigma \in \facesint} \intSh{   \Ov{\vrh}^2  }  \dt  \right)^{1/2}
\left(  \int_0^T \sum_{\sigma \in \facesint} \intSh{   \jump{ \vuh }^2 }  \dt  \right)^{1/2}
\\ &\aleq  h^{\eps}    +h^{(\eps+1)/2}   ,
\end{aligned}
\end{equation*}
where we have used the assumption (A1), and bounds \eqref{N},  \eqref{est1}.

\medskip
\noindent
 Collecting the above estimates of $E_i(r_h), i=1,\ldots, 4$  for $r_h \in \{\vrh,\vrh\uih,\vrh s_h\},$  we know that there exists a positive $\beta>0$ such that
\[
\int_0^T \intO{  r_h \vuh \cdot \Grad \phi  } \dt  - \int_0^T \sum_{\sigma \in \facesint} \intSh{ F_h  [ r_h,\vuh ] \jump{  \Pim \phi}}  \dt =\sum_{j=1}^4 E_j(r_h) \aleq h^\beta,
\]
provided $ \eps\in(0,1)$.

\subsection{Step 3 -- $\kappa$--term in the entropy equation \eqref{eq_entropy_stability} } 
 Using the product rule \eqref{product_rule} we can write
\[
\begin{aligned}
& \int_0^T \intO{ \kappa \Gradedge \vth \cdot  \left(\frac{1}{\vth}\Grad \phi + \phi \Gradedge  \left(\frac{1}{\vth}\right) \right) } \dt
-\int_0^T \intO{  \kappa \Gradedge{\vth}  \cdot   \Gradedge \left(\frac{\Pim \phi}{\vth}  \right)} \dt
\\&=
\int_0^T \intO{ \kappa \Gradedge \vth \cdot  \left(\frac{1}{\vth}\Grad \phi + \phi \Gradedge  \left(\frac{1}{\vth}\right) \right) } \dt
\\&
-\int_0^T \intO{  \kappa \Gradedge{\vth}  \cdot  \left(  \left( \Gradedge (\Pim \phi) \right) \Ov{\left(\frac{1}{\vth} \right) }  +  \left( \Gradedge \left(\frac{1}{\vth}\right) \right) \Ov{\Pim \phi} \right)
 } \dt
\\&=
\int_0^T \intO{ \kappa \Gradedge\vth \cdot \left(\frac{1 }{\vth}   \Grad\phi - \Ov{\left(\frac{1 }{\vth} \right) }(\Gradedge(\Pim \phi))  \right) } \dt
+ \int_0^T  \intO{ \kappa \Gradedge\vth \cdot \left( \Gradedge \left( \frac{1}{\vth} \right) \right) \left(\phi  - \Ov{ \Pim \phi}   \right) } \dt
\,\,\\
& =:  I_1 +I_2,
\end{aligned}
\]
where the residual terms $I_1$ and $I_2$ shall be controlled in what follows. Applying the H\"older inequality, interpolation estimates \eqref{n4c}, \eqref{n4c2} and \eqref{grad_vt} yields
\[
\begin{aligned}
I_1 &= \int_0^T \intO{ \kappa \Gradedge\vth \cdot \left(\frac{1 }{\vth}   \Grad\phi
- \Ov{\left(\frac{1 }{\vth} \right) }\Gradedge(\Pim \phi)  \pm \Ov{\left(\frac{1 }{\vth} \right) }\Grad \phi  \right) }  \dt
\\& =
\int_0^T \intO{ \kappa \Gradedge\vth \cdot  \Grad\phi \left(\frac{1 }{\vth}
- \Ov{\left(\frac{1 }{\vth} \right) }  \right) }  \dt
+
\int_0^T \intO{ \kappa  \Ov{\left(\frac{1 }{\vth} \right) } \Gradedge\vth \cdot \left( \Grad \phi- \Gradedge(\Pim \phi )  \right) }  \dt
\\&
\aleq h \norm{\Gradedge \vth}_{L^2L^2}  \norm{\Gradedge \left( \frac{1}{\vth} \right) }_{L^2L^2} \norm{\phi}_{C^1} +
h \norm{\Gradedge \vth}_{L^2L^2} \norm{\phi}_{C^2}
\aleq h.
\end{aligned}
\]
Recalling \eqref{N4} with $f(\vt_h)=\log(\vt_h)$ we could infer from \eqref{grad_vt} the bound
\begin{align*}\label{grad_inv_vt}
\norm{\Gradedge \left( \frac{1}{\vth} \right) }_{L^2L^2}  \lesssim \norm{\Gradedge \vth}_{L^2L^2} \lesssim 1.
\end{align*}
By an analogous argument we have
\[
\begin{aligned}
I_2 = \int_0^T  \intO{ \kappa \Gradedge\vth \cdot \left(\Gradedge \left( \frac{1}{\vth} \right) \right) \left(\phi  - \Ov{ \Pim \phi}   \right) } \dt
\aleq
 h \norm{\Gradedge \vth}_{L^2L^2} \norm{\Gradedge \left( \frac{1}{\vth} \right) }_{L^2L^2} \norm{\phi}_{C^1}
\aleq h.
\end{aligned}
\]
Thus we have shown the consistency of  the $\kappa$--term
\begin{equation*}\label{consistency_kappa}
\int_0^T \intO{ \kappa \Gradedge \vth \cdot  \left(\frac{1}{\vth}\Grad \phi + \phi \Gradedge \left( \frac{1}{\vth} \right) \right) } \dt
-\int_0^T \intO{  \kappa  \Gradedge{\vth} \cdot  \left( \Gradedge \left( \frac{\Pim \phi}{\vth} \right) \right)  } \dt \aleq h.
\end{equation*}

\subsection{Step 4 -- dissipation terms}\hspace{1ex} \\
Applying the estimate \eqref{est4}  with (A2) for the dissipation terms in the entropy equation \eqref{eq_entropy_stability} we immediately get
\begin{align*}\label{consistency_dissipation1}
&\int_0^T\intO{ \left(2\mu |\Dhuh|^2  + \lambda |\Divh \vuh|^2  \right) \frac{ \Pim\phi}{\vth} }
-\int_0^T\intO{ \left(2\mu |\Dhuh|^2  + \lambda |\Divh \vuh|^2  \right) \frac{\phi}{\vth} }
\\
&\aleq  h \norm{\phi}_{C^1} \int_0^T\intO{ \left(2\mu |\Dhuh|^2  + \lambda |\Divh \vuh|^2  \right) \frac{ 1}{\vth} }
 \aleq h .
\end{align*}

\subsection{Step 5 -- viscosity terms}
 The interpolation error estimate (\ref{n4c2}) and the {\sl a priori} bound \eqref{est4} are enough to control  the viscosity terms in the momentum equation. Indeed,   we have
\begin{subequations}\label{consistency_viscosity}
\begin{equation*}
\begin{split}
& \int_0^T \intO{ \Dhuh : \bD( \bfphi) }\dt  - \int_0^T \intO{ \Dhuh : \bD_h(\Pim \bfphi) }\dt
\\&=
\int_0^T \intO{ \Dhuh : \big( \bD( \bfphi) - \bD_h(\Pim \bfphi)  \big) }\dt
\lesssim
\norm{\Dhuh }_{L^2L^2} h \norm{\bfphi}_{C^2}
\aleq h,
\end{split}
\end{equation*}
 and analogously, for the divergence term 
\begin{equation*}
\begin{split}
& \int_0^T \intO{ \Divh \vuh\, \Divh (\Pim \bfphi )  } -\int_0^T \intO{\Divh \vuh \, \Div \bfphi}\dt
\\&=
\int_0^T \intO{\Divh \vuh \, \big(\Divh \left(\Pim \bfphi \right)   -\Div \bfphi \big) }\dt
\lesssim
\norm{\Divh \vuh}_{L^2L^2} h \norm{\bfphi}_{C^2}
\lesssim h.
\end{split}
\end{equation*}
\end{subequations}

\subsection{Step 6 -- pressure term}
The pressure term  in the momentum equation is controlled, thanks to the interpolation estimate \eqref{n4c2} and the {\sl a priori} estimate \eqref{N} for the pressure, as 
\begin{equation*}\label{consistency_pressure}
 \int_0^T  \intO{ p_h  \Divh ( \Pim  \bfphi)     } \dt  -  \int_0^T  \intO{ p_h   \Div \bfphi  } \dt
\aleq
\norm{p_h}_{L^\infty L^1} h \norm{\bfphi}_{C^2}
\aleq h.
\end{equation*}

\subsection{Step 7 --  entropy production terms $D_1$, $D_2$ and $D_{3}$ }
 In an analogous way we bound the three entropy production terms in the entropy equation,
\begin{subequations}\label{consistency_dissipation2}
\begin{equation*}
\int_0^T \intO{  (D_1\Pim\phi + D_2  \Ov{\Pim\phi} )} - \int_0^T \intO{ (D_1 +D_2) \phi}  \aleq h  \norm{\phi}_{C^1} \big(\norm{D_1}_{L^1L^1}  +\norm{D_2}_{L^1L^1}\big)  \aleq h ,
\end{equation*}
and
\begin{equation*}
 \int_0^T  \intO{D_{3} \cdot \Gradedge(\Pim\phi)}
\aleq \norm{D_{3}}_{L^1L^1}  \norm{\phi}_{C^2}
\aleq h^\eps,
\end{equation*}
using the {\sl a priori} estimates
 \eqref{est5} and \eqref{est_D3}, respectively.
\end{subequations}

\medskip

Let us summarize the above  calculations leading to the desired consistency formulation of the numerical approximation of the continuity and momentum equations as well as the discrete entropy equation. 
\begin{Lemma}[Consistency of  the continuity and momentum equations] \label{lem_consistency}
Let $(\vrh, \vuh, \vth)$, $h \in (0,h_0)$, $h_0 \ll 1$ be the numerical solution obtained by our finite volume scheme \eqref{scheme_ns} 
with  $\TS\approx h$ and  $0<\eps <1$.
Then there exists $\beta >0$ such that
\begin{equation} \label{cP1}
- \intO{ \vrh^0 \phi(0,\cdot) }  =
\int_0^T \intO{ \left[ \vrh \partial_t \phi + \vrh \vuh \cdot \Grad \phi \right]} \dt  +  \order( h^{\beta}), \;
\end{equation}
for any $\phi \in C^2([0,T] \times  \Omega), \; \phi(T)=0$;
\begin{multline} \label{cP2}
- \intO{ \vrh^0 \vuh^0 \bfphi(0,\cdot) }  =
\int_0^T \intO{ \left[ \vrh \vuh \cdot \partial_t \bfphi + \vrh \vuh \otimes \vuh  : \Grad \bfphi  + p_h \Div \bfphi \right]} \dt
\\
 -  \mu \int_0^T \intO{  \Dhuh  :  \bD( \bfphi) }  \dt
-   \lambda\int_0^T \intO{ \Divh \vuh\, \Div \bfphi}\dt
+ \order( h^{\beta}), \;
\end{multline}
for any $\bfphi \in C^2([0,T] \times { \Omega}; \mathbb R^d), \; \bfphi(T)=0$.
\end{Lemma}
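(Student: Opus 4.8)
The plan is to assemble the term-by-term consistency estimates already carried out in Steps~1, 2, 5 and~6. First I would insert the piecewise-constant test functions $\phi_h=\Pim\phi$ into the weak continuity equation \eqref{scheme_ns_den} and $\bfphi_h=\Pim\bfphi$ into the weak momentum equation \eqref{scheme_ns_mom}, multiply each identity by $\TS$ and sum over $k=1,\dots,N_T$; since the numerical solution has been extended to a function that is piecewise constant in time, these discrete sums coincide with the time integrals $\int_0^T\cdots\dt$ appearing in \eqref{cP1}--\eqref{cP2}, up to the quadrature remainder handled in Step~1.

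For the discrete time derivative I would use the summation-by-parts identity of Step~1 with $r_h=\vrh$ (resp.\ $r_h=\vrh\uih$): it transfers $D_t$ onto the smooth test function, produces the initial term $\intO{r_h^0\phi(0)}$, and leaves a remainder bounded by $\TS\,\norm{\phi}_{C^2}\norm{r_h}_{L^1L^1}\aleq h$ by \eqref{N} and $\TS\approx h$. For the convective terms I would apply Lemma~\ref{lem_convective_trans} with $\bfv_h=\vuh$ and $r_h\in\{\vrh,\vrh\uih\}$, which generates the four error terms $E_1,\dots,E_4$; feeding in the bounds of Step~2 — which combine the artificial-diffusion estimate \eqref{est1}, the upwind dissipation estimate \eqref{est6}, the Sobolev-type bound \eqref{ul2l6} together with the $L^3L^3$ interpolation bound on $\vuh$, the a priori energy bounds \eqref{N}, and assumption (A1) — one obtains $E_j\aleq h^{\beta}$ with $\beta=\min\{\tfrac12,\tfrac{1-\eps}{2},\eps,\tfrac{\eps+1}{2}\}>0$ for every $0<\eps<1$. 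In the momentum equation the viscosity terms are then handled by writing $\bD(\bfphi)=\bD_h(\Pim\bfphi)+\big(\bD(\bfphi)-\bD_h(\Pim\bfphi)\big)$ and $\Div\bfphi=\Divh(\Pim\bfphi)+\big(\Div\bfphi-\Divh(\Pim\bfphi)\big)$, estimating the interpolation defects by \eqref{n4c2} and pairing them with \eqref{est4}, which gives $\order(h)$; the pressure term is treated the same way using \eqref{n4c2} and the pressure bound in \eqref{N}. Summing all contributions and taking $\beta$ to be the smallest exponent obtained yields \eqref{cP1}--\eqref{cP2}.

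I expect the main obstacle to be the convective error for $r_h=\vrh\uih$, i.e.\ the terms $E_1(\vrh\uih)$ and $E_2(\vrh\uih)$. There the factor $\jump{\vuh}$ cannot be absorbed by the upwind dissipation alone, so one must exploit the stronger $\order(h^{\eps+1})$ artificial viscosity through \eqref{est1}; this forces a split such as $\jump{\vrh\uih}=\Ov{\uih}\jump{\vrh}+\Ov{\vrh}\jump{\uih}$ and, after the H\"older inequality, a trade of a power $h^{\frac{1-\eps}{2}}$ against the $L^2L^2$ bound on $\vuh$ coming from \eqref{ul2l6}. This is precisely the point where the hypothesis $\eps<1$ is used and where the value of $\beta$ is pinned down.
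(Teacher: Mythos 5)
Your proposal is correct and follows essentially the same route as the paper: the same test functions $\Pim\phi$, $\Pim\bfphi$, the same summation by parts in time from Step~1, the same reduction of the convective terms via Lemma~\ref{lem_convective_trans} to the errors $E_1,\dots,E_4$ estimated exactly as in Step~2 (including the product-rule split of $\jump{\vrh\uih}$ and the trade against \eqref{est1} and \eqref{est6}), and the same interpolation-error treatment of the viscosity and pressure terms from Steps~5 and~6. The explicit exponent $\beta=\min\{\tfrac12,\tfrac{1-\eps}{2},\eps\}$ you identify is consistent with the bounds the paper collects before stating the lemma.
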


\begin{Lemma}[Consistency of  the entropy equation]
 Let $(\vrh, \vuh, \vth)$, $h \in (0,h_0)$, $h_0 \ll 1$ be the numerical solution obtained by our finite volume scheme \eqref{scheme_ns} 
with $\TS\approx h$ and  $0<\eps <1$.
Then there exists $\beta >0$ such that
for any $\phi \in C^2([0,T] \times \Omega)$,   $\phi(T)=0,$  it holds that
\begin{multline}\label{consistency_entropy}
- \intO{ \vrh^0 s_h^0 \phi(0,\cdot) }  =
\int_0^T\intO{ [ \vrh s_h \pd_t\phi + \vrh  s_h \vuh \cdot \Grad \phi ] }
-  \int_0^T\intO{ \kappa \Gradedge \vth \cdot \left(\frac{1}{\vth}\Grad \phi +\phi \Gradedge \left(\frac{1}{\vth}\right) \right) }
\\ + \int_0^T\intO{ \left(2\mu |\Dhuh|^2  + \lambda |\Divh \vuh|^2  \right) \frac{ \phi}{\vth} }
+\int_0^T\intO{(D_1  + D_2) \phi} \dt + \mathcal{O} (h^\beta),
\end{multline}
where $D_1+D_2 \in L^1((0,T)\times \Omega)$ are the non--negative numerical entropy production terms, cf. \eqref{entropy_dissipation}.
\end{Lemma}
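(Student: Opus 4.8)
The plan is to obtain \eqref{consistency_entropy} directly from the discrete entropy equation \eqref{eq_entropy_stability}, in complete parallel with the proof of Lemma~\ref{lem_consistency}. Fix $\phi\in C^2([0,T]\times\Omega)$ with $\phi(T)=0$, insert the piecewise constant test function $\phi_h=\Pim\phi$ into \eqref{eq_entropy_stability}, multiply by the time step $\TS$, and sum over $k=1,\ldots,N_T$. This produces one global-in-time identity whose left-hand side is $\int_0^T\intO{D_t(\vrh s_h)\,\Pim\phi}\dt$ and whose remaining terms are the upwind convective flux $\int_0^T\sum_{\sigma\in\facesint}\intSh{Up(\vrh s_h,\vuh)\jump{\Pim\phi}}\dt$, the heat-conduction term $\int_0^T\intO{\kappa\Gradedge\vth\cdot\Gradedge(\Pim\phi/\vth)}\dt$, the viscous dissipation tested against $\Pim\phi/\vth$, and the entropy-production integral $\int_0^T\intO{D_1\Pim\phi+D_2\,\Ov{\Pim\phi}+D_3\cdot\Gradedge\Pim\phi}\dt$. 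The task is then to replace each discrete expression by the corresponding continuous one appearing in \eqref{consistency_entropy}, keeping track of the consistency errors.

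These replacements are Steps~1--4 and Step~7 above, specialized to $r_h=\vrh s_h$. By Step~1 and the uniform bound $\vrh s_h\in L^\infty L^1$ (from \eqref{N} and \eqref{entropy_bounds}) the time-derivative term equals $-\int_0^T\intO{\vrh s_h\,\pd_t\phi}\dt-\intO{\vrh^0 s_h^0\,\phi(0,\cdot)}$ up to $\order(h)$. For the convective flux I invoke Lemma~\ref{lem_convective_trans}: since the entropy equation carries the pure upwind flux $Up$ rather than $F_h$, the $h^\eps$-contribution in that identity is absent, so effectively $E_4(\vrh s_h)=0$, and the surviving errors $E_1(\vrh s_h),E_2(\vrh s_h),E_3(\vrh s_h)$ are bounded by $h^{1/2}$, $h^{(1-\eps)/2}+h^{(3-\eps)/2}$ and $h$, using the logarithmic Lipschitz estimate \eqref{N4} to reduce $\jump{s_h}$ to $\jump{\vrh}$ and $\jump{\vth}$ and then \eqref{est1}, \eqref{est6}, \eqref{grad_vt}, \eqref{ul2l6} together with (A1)--(A2). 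Step~3 handles the $\kappa$-term: the product rule \eqref{product_rule}, together with $\norm{\Gradedge\vth}_{L^2L^2}\aleq1$ and $\norm{\Gradedge(1/\vth)}_{L^2L^2}\aleq1$ obtained from \eqref{grad_vt} and \eqref{N4}, splits $\Gradedge(\Pim\phi/\vth)$ into $(1/\vth)\Grad\phi$ and $\phi\,\Gradedge(1/\vth)$ up to $\order(h)$. Step~4 replaces $\Pim\phi/\vth$ by $\phi/\vth$ in the dissipation term at the cost of $\order(h)$ via \eqref{est4} and (A2), and Step~7 replaces $D_1\Pim\phi+D_2\,\Ov{\Pim\phi}+D_3\cdot\Gradedge\Pim\phi$ by $(D_1+D_2)\phi$ modulo $\order(h)+\order(h^\eps)$ using \eqref{est5} and \eqref{est_D3}.

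Assembling these contributions and rearranging yields \eqref{consistency_entropy}, the total error being a finite sum of powers $h^\gamma$ with $\gamma\in\{1,\,1/2,\,(1-\eps)/2,\,(3-\eps)/2,\,\eps\}$, so the statement holds with $\beta:=\min\{1/2,(1-\eps)/2,\eps\}>0$ for every $\eps\in(0,1)$; the non-negativity $D_1,D_2\geq0$ is part of Lemma~\ref{lem_entropy_stability} and $D_1+D_2\in L^1((0,T)\times\Omega)$ follows from \eqref{est5}. The one genuinely delicate step is the convective-flux analysis for $r_h=\vrh s_h$: because $s_h=c_v\log\vth-\log\vrh$ is built from logarithms, one must first pass through \eqref{N4} to control $\jump{s_h}$ before the second a priori estimates can be applied; once that reduction is done, everything else is a routine interpolation-error computation already prepared in \eqref{n4c}--\eqref{n4c2}.
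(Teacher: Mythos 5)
Your proposal is correct and follows essentially the same route as the paper: testing the discrete entropy equation \eqref{eq_entropy_stability} with $\Pim\phi$, summing in time, and invoking Steps~1--4 and~7 of Section~\ref{sec_consistency} with $r_h=\vrh s_h$ (including the observation that $E_4(\vrh s_h)=0$ because the entropy equation carries the pure upwind flux, and the reduction of $\jump{s_h}$ via \eqref{N4}). The explicit choice $\beta=\min\{1/2,(1-\eps)/2,\eps\}$ is consistent with the error powers collected in the paper.
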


It should be pointed out here again that the numerical scheme \eqref{scheme_ns} is energy dissipative, cf.~\eqref{d_en_ineq_old}, which means
\begin{equation}\label{d_en_ineq}
 \intO{ \left(\frac{1}{2} \vrh |\vuh|^2 +c_v \vrh \vth \right) }
\leq  \intO{ \left(\frac{1}{2} \vrh^0 |\vuh^0|^2 + c_v \vrh^0 \vth^0 \right) }.
\end{equation}

\section{Convergence of the finite volume method}
\label{sec_convergence}

The aim of this section is to show
the convergence of our finite volume method \eqref{scheme_ns_fv} to the strong solution of the Navier--Stokes--Fourier system \eqref{ns_eqs}  on the lifespan of the latter.  We begin with the definition of the  DMV solution of \eqref{ns_eqs}  which plays an essential role in the proof of the main result, see also \cite[Definition 2.3]{stability_nsf}.
\begin{Definition}[DMV solution]\label{def_DMV}
A parametrized family of probability measures $\{\Nu\}_{(t,x)\in (0,T)\times\Omega}$ is
the \emph{dissipative measure--valued} solution to the Navier--Stokes--Fourier system \eqref{ns_eqs} with the initial condition $\{\mathcal V_{0,x}\}_{x\in\Omega}$ if the following hold:
\begin{itemize}
\item 
the mapping
\[
\mathcal{V}_{t,x}:
(t,x) \in (0,T) \times  \Omega \mapsto \mathcal{P}(\mathcal{F}) \quad \mbox{is weakly-(*) measurable,} \quad
\]
with $\mathcal{P}$ being the space of probability measures defined on the phase space
\[
\mathcal{F} = \left\{ \vr, \vt, \vu, \mathbf D_u, \mathbf D_\vt \ \Big| \ \vr \geq 0,\ \vt \geq 0,\ \vu \in \mathbb R^d,\ \mathbf D_u \in \mathbb R^{d\times d}_{\rm sym},\ \mathbf D_\vt \in \mathbb R^d \right\};
\]
 \item  $\{\Nu\}_{(t,x)\in (0,T)\times\Omega}$ complies with the compatibility condition
 \begin{equation}\label{perpartes}
\begin{aligned}
-\intTO{\langle \Nu; \vu \rangle \cdot \Div \mathbb T} &= \intTO{\langle \Nu; \mathbf D_u \rangle : \mathbb T}, \ &&\mbox{for any } \mathbb T \in C^1([0,T]\times\Omega; \mathbb R^{d\times d}_{\rm sym}) \\
-\intTO{\langle \Nu; \vt \rangle  \Div \bfvarphi } &= \intTO{\langle \Nu; \mathbf D_\vt \rangle  \bfvarphi}, \ &&\mbox{for any } \bfvarphi \in C^1([0,T]\times\Omega;\mathbb R^{d});
\end{aligned}
\end{equation}
\item conservation of mass
\begin{equation} \label{G5}
\left[ \intO{ \left< \Nu ; \vr \right> \varphi (t,x) } \right]_{t = 0}^{t = \tau} =
\int_0^\tau \intO{ \left[ \left< \Nu ; \vr \right> \partial_t \varphi(t,x) + \left< \Nu; \vr \vu \right> \cdot \Grad \varphi(t,x) \right] } \dt
\end{equation}
\hfill for a.a. $\tau \in [0,T]$ and any $\varphi \in C^1( [0,T] \times \Omega)$;

\item balance of momentum
\begin{equation} \label{G6}
\begin{split}
&\left[ \intO{ \left< \Nu ; \vr \vu \right> \cdot \bfvarphi (t,x) } \right]_{t = 0}^{t = \tau} \\ &=
\int_0^\tau \intO{ \left[ \left< \Nu ; \vr \vu \right> \cdot \partial_t \bfvarphi(t,x) + \left< \Nu; \vr \vu \otimes \vu \right> : \Grad \bfvarphi(t,x)
+ \left< \Nu ; p(\vr, \vt) \right> \Div \bfvarphi (t,x) \right] } \dt\\
&+ \int_0^\tau \intO{ \left< \Nu; \mathbb S(\mathbf D_u) \right> : \Grad \bfvarphi(t,x) } \dt +
\int_0^\tau \int_{\Omega} \Grad \bfvarphi : {\rm d}{\nu_C}
\end{split}
\end{equation}
\hfill for a.a. $\tau \in [0,T]$ and any $\bfvarphi \in C^1([0,T] \times \Omega; \mathbb R^d)$,
where $\nu_C \in \mathcal{M}( [0,T] \times  \Omega;\mathbb R^{d \times d})$\footnote{The symbol $\nu_C$ stands for a tensor--valued signed Borel measure and the term $\int_0^\tau \int_{ \Omega} \Grad \bfvarphi : {\rm d}{\nu_C}$ is understood as the value of the functional $\nu_C$ over the continuous function $\Grad \bfvarphi$.}  is  called  \emph{concentration defect measure};

\item energy inequality
\begin{equation} \label{G7}
\intO{ \left< \mathcal{V}_{\tau,x} ; \frac{1}{2} \vr |\vu|^2 + \vr e(\vr, \vt) \right>  }
\leq  \intO{ \left< \mathcal{V}_{0,x} ; \frac{1}{2} \vr |\vu|^2 + \vr e(\vr, \vt) \right>  }
\end{equation}
\hfill for a.a. $\tau \in [0,T]$;

\item entropy inequality
\begin{equation} \label{G10}
\begin{split}
&\left[ \intO{ \left< \Nu; \vr s(\vr, \vt) \right> \varphi(t,x) } \right]_{t = 0}^{t = \tau} \\& \geq
\int_0^\tau \intO{ \left[ \left< \Nu; \vr s(\vr, \vt) \right> \partial_t \varphi(t,x) +
\left< \Nu; \vr s(\vr, \vt) \vu - \frac{\kappa\Grad\vt }{\vt} \right>
\cdot \Grad \varphi(t,x) \right] } \dt\\
&+ \int_0^\tau \intO{ \left< \Nu ; \frac{1}{\vt} \left( \mathbb S(\mathbf D_u) : \mathbf D_u + \frac{\kappa  |\mathbf D_\vt|^2 }{\vt}\right) \right>
\varphi (t,x) } \dt
\end{split}
\end{equation}
\hfill for a.a. $\tau \in [0,T]$ and any $\varphi \in C^1([0,T] \times  \Omega)$, $\varphi \geq 0$;
\item 
The \emph{dissipation defect}   given by
\[
\mathcal{D}(\tau) = \intO{ \left< \mathcal{V}_{0,x}; \frac{1}{2} \vr |\vu|^2 + \vr e(\vr, \vt) \right> } -
\intO{ \left< \mathcal{V}_{\tau,x}; \frac{1}{2} \vr |\vu|^2 + \vr e(\vr, \vt) \right> } \geq 0
\]
and the concentration defect measure $\nu_C$ from \eqref{G6} satisfy
 \begin{equation} \label{G11}
\int_0^T  \psi(t) \int_{ \Omega} {\rm d}|\nu_C|   \aleq \int_0^T \psi(t) \mathcal{D}(t) \ \dt
\end{equation}
\hfill for any $\psi \in C([0,T]), \ \psi \geq 0$.
\\
\end{itemize}
\qed
\end{Definition}

We refer the reader to, e.g., \cite{BALL2,PED1} for more details on the Young measure.

\begin{Remark}
It should be noted that in \cite[Definition 2.3]{stability_nsf} an additional compatibility condition of Korn--Poincar\' e--type inequality, cf. \cite[(2.13)]{stability_nsf}, was required for the case of no slip/no flux boundary conditions for the velocity and the heat flux, respectively. This condition was needed in the proof of the DMV--strong uniqueness principle, cf.~\cite[Sections 4.1, 5.1.2]{stability_nsf}. In the case of space--periodic boundary conditions the Korn--Poincar\' e inequality does not hold. Nevertheless, the DMV--strong uniqueness principle can be obtained in an analogous way as in \cite{stability_nsf} provided the density is bounded from below.
\end{Remark}

\subsection{Convergence to  a dissipative measure-valued solution}
In view of the assumptions \eqref{assumptions} and  {\sl a priori} estimates \eqref{N},  \eqref{apriori_2.1} and  \eqref{apriori_2.2} 
we may deduce, at least for a subsequence, that the numerical solutions $\{\mathbf U_h\}_{h>0}=\{(\vr_h,\vu_h,\vt_h,\Dhuh,\Gradedge\vth)\}_{h>0}$ in the limit for $h \to 0$  generate  a Young measure $\{ \mathcal{V}_{t,x} \}_{(t,x) \in (0,T) \times \Omega}$, whose
 support  is contained in the set
\[
{\rm supp} [ \mathcal{V}_{t,x} ] \subset
\left\{ \vr, \vt, \vu, \mathbf D_u, \mathbf D_{\vt} \ \Big| \ 0 < \underline{\vr} \leq \vr \leq \Ov{\vr},\ 0 < \underline{\vt} \leq \vt \leq \Ov{\vt}, \ \vu \in \mathbb R^d,\ \mathbf D_u \in \mathbb R^{d\times d}_{\rm sym},\  \mathbf D_\vt \in \mathbb R^d \right\}
\]
for a.a. $(t,x) \in (0,T) \times \Omega.$ 
More specifically, 
\begin{itemize}
\item the mapping
$
\mathcal{V}_{t,x}:
(t,x) \in (0,T) \times  \Omega \mapsto \mathcal{P}(\mathcal{F})
$
is weakly-(*) measurable
\item
$
G(\vU_h ) \to \av{G (\vU)} \ \mbox{weakly-(*) in}\ L^\infty((0,T) \times \Omega)
$
 and
\[
\av{G (\vU)} (t,x) = \int_{\mathcal{F}} G(\vU) {\rm d}\mathcal{V}_{t,x} \equiv
\left< \mathcal{V}_{t,x}; G(\vU) \right> \ \mbox{for a.a.} \ (t,x) \in (0,T) \times \Omega,
\]
for any $G \in C_c( \mathcal{F} )$, $\vU=\left( \vr, \vt, \vu, \mathbf D_u, \mathbf D_\vt \right) \in \mathcal F.$
\end{itemize}
This, in particular, means that all nonlinearities appearing in the consistency formulation \eqref{cP1} -- \eqref{cP2} are weakly precompact in the
Lebesgue space $L^1((0,T) \times \Omega),$ and hence passing to the limit with $h \rightarrow 0$ yields \eqref{G5} -- \eqref{G6} and $\nu_C\equiv 0.$ The compatibility condition \eqref{perpartes} is a direct consequence of \eqref{apriori_2.2}, since
\begin{align*}
  \Dhuh &\to \Du \ \mbox{weakly in} \ L^2((0,T) \times \Omega;
\mathbb R^{d \times d}), \quad
 \Gradedge\vth \to \nabla _x \vt \ \mbox{weakly in} \ L^2((0,T) \times \Omega;
\mathbb R^d ).
\end{align*}
 Now we recall \cite[Lemma~2.1]{FGSWW}  which shall be to used to pass to the limit in the entropy equality.  
\begin{Lemma}
\label{lemma}
Let
\begin{align*}
|F(\vU)|\leq G(\vU) \ \mbox{ for all }\ \vU \in \mathcal{F}.
\end{align*}
Then
\begin{align*}
\left| \av{F(\vU)} - \left< \mathcal{V}_{t,x}; F(\vU) \right> \right| \leq \av{G(\vU)} -  \left< \mathcal{V}_{t,x}; G(\vU) \right>   \mbox{ in } \mathcal{M}([0,T]\times\Omega).
\end{align*}
\end{Lemma}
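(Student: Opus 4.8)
The plan is to pass to the Young-measure limit by truncation, reducing the assertion to an elementary sign argument at the level of concentration-defect measures. Throughout I work along the fixed subsequence that generates $\{\mathcal{V}_{t,x}\}$; along it $G(\vU_h)$ is bounded in $L^1((0,T)\times\Omega)$ by the a~priori estimates \eqref{N}, \eqref{apriori_2.1}, \eqref{apriori_2.2}, and since $|F|\le G$ the same holds for $F(\vU_h)$, so after a further extraction (not relabelled) one has $F(\vU_h)\to\av{F(\vU)}$ and $G(\vU_h)\to\av{G(\vU)}$ weakly-$*$ in $\mathcal{M}([0,T]\times\Omega)$. The crucial auxiliary fact is a lower-semicontinuity statement: for every continuous $H\colon\mathcal{F}\to[0,\infty)$ (continuity on the region containing $\supp\mathcal{V}_{t,x}$ is enough) with $\{H(\vU_h)\}_h$ bounded in $L^1$, the barycenter $\left<\mathcal{V}_{t,x};H(\vU)\right>$ belongs to $L^1((0,T)\times\Omega)$ and, along any subsequence for which $H(\vU_h)$ converges weakly-$*$ in $\mathcal{M}([0,T]\times\Omega)$, its limit $\av{H(\vU)}$ obeys
\[
\av{H(\vU)}\ \geq\ \left<\mathcal{V}_{t,x};H(\vU)\right>\,\dxdt\qquad\text{in }\mathcal{M}([0,T]\times\Omega).
\]

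To prove this auxiliary fact I would truncate: choose a nondecreasing $\phi_k\in C([0,\infty);[0,1])$ with $\phi_k\equiv 1$ on $[0,k]$ and $\phi_k\equiv 0$ on $[k+1,\infty)$, and set $H_k:=H\,\phi_k(H)\in C_b(\mathcal{F})$, so that $0\le H_k\le H$ and $H_k\nearrow H$ pointwise. The components of $\vU_h=(\vrh,\vth,\vuh,\Dhuh,\Gradedge\vth)$ are bounded in $L^2((0,T)\times\Omega)$ — $\vrh,\vth$ by \eqref{assumptions}, $\vuh$ by \eqref{ul2l6}, $\Dhuh$ by \eqref{est4}, $\Gradedge\vth$ by \eqref{grad_vt} — so no mass of $\{\vU_h\}$ escapes to infinity, $\{\mathcal{V}_{t,x}\}$ is a genuine family of probability measures, and the fundamental theorem of Young measures (see \cite{PED1,BALL2}) yields $H_k(\vU_h)\to\left<\mathcal{V}_{t,x};H_k(\vU)\right>$ weakly-$*$ in $L^\infty((0,T)\times\Omega)$. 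From $H(\vU_h)\ge H_k(\vU_h)\ge 0$, order being preserved in the weak-$*$ limit of nonnegative sequences, we get $\av{H(\vU)}\ge\left<\mathcal{V}_{t,x};H_k(\vU)\right>\dxdt$ for every $k$; moreover $\intTO{\left<\mathcal{V}_{t,x};H_k(\vU)\right>}=\lim_h\intTO{H_k(\vU_h)}\le\liminf_h\norm{H(\vU_h)}_{L^1}<\infty$, whence by monotone convergence $\left<\mathcal{V};H\right>\in L^1$ and $\left<\mathcal{V};H_k\right>\to\left<\mathcal{V};H\right>$ in $L^1$. Letting $k\to\infty$ in $\av{H(\vU)}\ge\left<\mathcal{V}_{t,x};H_k(\vU)\right>\dxdt$ proves the auxiliary fact.

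Now I would invoke it twice: first for $H=G$, which shows that $\mu:=\av{G(\vU)}-\left<\mathcal{V}_{t,x};G(\vU)\right>\dxdt$ is a nonnegative Radon measure and $\left<\mathcal{V};G\right>\in L^1$; then for $H=G+\theta\cdot F\ge 0$, where $\theta$ is an arbitrary unit vector in $\mathbb R$ or $\mathbb R^d$ according as $F$ is scalar- or vector-valued — admissible because $0\le G+\theta\cdot F\le 2G$ with $G(\vU_h)$ bounded in $L^1$, and because $G(\vU_h)+\theta\cdot F(\vU_h)\to\av{G(\vU)}+\theta\cdot\av{F(\vU)}$ along the fixed subsequence by linearity. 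Writing $\nu:=\av{F(\vU)}-\left<\mathcal{V}_{t,x};F(\vU)\right>\dxdt$, the second application reads (again by linearity of the weak-$*$ limit and of the barycenter)
\[
\mu+\theta\cdot\nu\ \geq\ 0\qquad\text{in }\mathcal{M}([0,T]\times\Omega)
\]
for every unit vector $\theta$. Evaluating on an arbitrary Borel set $A$, the inequality $\mu(A)+\theta\cdot\nu(A)\ge 0$ for all unit $\theta$ forces $\mu(A)\ge|\nu(A)|$; taking the supremum over finite Borel partitions of $A$ then gives $|\nu|(A)\le\mu(A)$, i.e. $|\nu|\le\mu$ as measures. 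This is precisely the claimed inequality $\big|\av{F(\vU)}-\left<\mathcal{V}_{t,x};F(\vU)\right>\big|\le\av{G(\vU)}-\left<\mathcal{V}_{t,x};G(\vU)\right>$ in $\mathcal{M}([0,T]\times\Omega)$, the second term on the right being read as $\left<\mathcal{V}_{t,x};G(\vU)\right>\dxdt$.

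The only genuinely delicate point is the auxiliary fact: one must make sure the concentration defect $\av{G(\vU)}-\left<\mathcal{V}_{t,x};G(\vU)\right>\dxdt$ is a well-defined \emph{nonnegative} Radon measure, which rests on the integrability of the barycenter $\left<\mathcal{V};G\right>$ and on identifying the weak-$*$ limit of the truncations $G_k(\vU_h)$ with $\left<\mathcal{V};G_k\right>$. Both hinge on the tightness of the Young measure, i.e. on the uniform $L^2((0,T)\times\Omega)$ bound on $\vU_h$ furnished by \eqref{N}, \eqref{apriori_2.1} and \eqref{apriori_2.2}; once that is in hand the remaining limit passages are entirely routine.
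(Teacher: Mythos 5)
Your proof is correct. Note, however, that the paper does not prove this statement at all: Lemma~\ref{lemma} is simply recalled from \cite[Lemma~2.1]{FGSWW}, so there is no in-paper argument to compare against. What you have written is essentially a complete reconstruction of the proof given in that reference: the key auxiliary fact that $\av{H(\vU)} \geq \left< \mathcal{V}_{t,x}; H(\vU)\right>\dxdt$ in $\mathcal{M}([0,T]\times\Omega)$ for nonnegative continuous $H$ with $\{H(\vU_h)\}$ bounded in $L^1$ (obtained by truncation, the fundamental theorem of Young measures for the bounded truncations, and monotone convergence), applied to $G$ and to $G+\theta\cdot F\geq 0$ for unit vectors $\theta$, followed by the total-variation argument $\mu(A)+\theta\cdot\nu(A)\geq 0 \Rightarrow |\nu|\leq\mu$. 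All steps check out; the tightness needed to guarantee that $\{\mathcal{V}_{t,x}\}$ consists of probability measures and that the truncations converge to their barycenters is indeed supplied by the uniform bounds \eqref{assumptions}, \eqref{ul2l6}, \eqref{est4}, \eqref{grad_vt}. (In the paper's only application the lemma is invoked with $F\equiv 0$, so the vector-valued refinement with the directions $\theta$ is not even needed there, but it is required for the lemma as stated and you handle it correctly.)
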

\noindent
We consider the limit in the entropy equation \eqref{consistency_entropy}. For the nonlinear discrete entropy production terms 
\begin{align*}
\mathcal R_h +  \calP_h:=
\frac{1}{\vt_h}\big(2\mu|\Dhuh|^2+\lambda|\Divh\vuh|^2\big) + \frac{\kappa|\nabla_h\vth|^2}{\vt^2_h} +   \calP_h \geq 0,
\end{align*}
where $\calP_h :=D_1 +D_2 \geq 0$,  cf. \eqref{est_entropy},  we can only assert that
\begin{align*}
&\mathcal R_h  + \calP_h \rightarrow \av{\mathcal R +   \calP}   \ \mbox{weakly-(*) in}\ \mathcal{M}([0,T] \times \Omega; \mathbb R).
\end{align*}
We  apply Lemma~\ref{lemma} for $F(\vU)\equiv 0$  and $$\displaystyle G(\vU)=\frac{1}{\vt}\big(2\mu|\mathbf D_u|^2+\lambda|\mathrm{tr}\,\mathbf D_u|^2\big) + \frac{\kappa|\mathbf D_\vt|^2}{\vt^2} = \frac{1}{\vt}\left(S(\mathbf D_u) :\mathbf D_u + \frac{\kappa|\mathbf D_\vt|^2}{\vt} \right)$$  to get
\begin{align*}
0 \leq \av{\mathcal R }-\left< \mathcal{V}_{t,x};\mathcal R   \right>.
\end{align*}
 Consequently, passing to the limit in the entropy equation \eqref{consistency_entropy} with non-negative test function we derived the entropy inequality \eqref{G10}. 
Similarly,  passing to the limit in the discrete energy inequality \eqref{d_en_ineq} directly yields \eqref{G7}. 
%
%
\noindent
Note that  the inequality \eqref{G11} is satisfied since $\nu_c\equiv 0.$ 
Summing up the preceding discussion, we can state the following result.
\begin{Theorem}[Convergence to DMV solution]\label{thm_dmvs}
Let the initial data satisfy the assumptions
\begin{align*}
0< \underline{\vr} \leq \vr_{0,h} \leq \overline{\vr}, \
0< \underline{\vt} \leq \vt_{0,h} \leq \overline{\vt}, \ \|\vu_{0,h}\|_{L^2} \leq \overline{u},
\end{align*}
for some positive constants $\underline{\vr},$ $\overline{\vr},$  $\underline{\vt},$ $\overline{\vt},$ $\overline{u}.$
Let $(\vr_h, \vt_h, \vu_h)$ be the solution of the finite volume scheme \eqref{scheme_ns} with $0 < \eps <1, $ such that the assumptions \eqref{assumptions} hold, i.e.,
\begin{align*}
0< \underline{\vr} \leq \vr_{h}(t) \leq \overline{\vr}, \
0< \underline{\vt} \leq \vt_{h}(t) \leq \overline{\vt} \ \mbox{ uniformly for } \ h \rightarrow 0 \ \mbox{ and all } \ t \in (0,T).
\end{align*}
Then the family  $\{\vr_h,\vt_h,\vu_h,\Dhuh,\nabla_h\vt_h\}_{h>0}$ generates a Young measure $\{\Nu\}_{(t,x)\in (0,T)\times \Omega}$ that is a
DMV solution of the Navier--Stokes--Fourier system \eqref{ns_eqs} in the sense of Definition~\ref{def_DMV}.
\end{Theorem}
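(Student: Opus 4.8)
The plan is to assemble the ingredients prepared in Sections~3 and~4 and pass to the limit $h\to0$. First I would invoke the fundamental theorem on Young measures, see \cite{BALL2,PED1}: by the uniform bounds \eqref{assumptions} together with the \emph{a priori} estimates \eqref{N}, \eqref{apriori_2.1} and \eqref{apriori_2.2}, the family $\{\vU_h\}_{h>0}=\{(\vr_h,\vt_h,\vuh,\Dhuh,\Gradedge\vth)\}_{h>0}$ is bounded in $L^2((0,T)\times\Omega;\mathcal F)$ and hence generates, up to a subsequence, a parametrized probability measure $\{\Nu\}_{(t,x)\in(0,T)\times\Omega}$ which is weakly-(*) measurable and satisfies $G(\vU_h)\to\langle\Nu;G(\vU)\rangle$ weakly-(*) in $L^\infty((0,T)\times\Omega)$ for every $G\in C_c(\mathcal F)$. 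The two-sided $L^\infty$ bounds on $\vr_h$ and $\vt_h$ in \eqref{assumptions} at once confine $\mathrm{supp}[\Nu]$ to the set displayed before the theorem for a.a.\ $(t,x)$.

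Next I would treat the balance laws one at a time. For the continuity and momentum equations I use the consistency Lemma~\ref{lem_consistency}: thanks to \eqref{assumptions} and the bound $\norm{\vuh}_{L^2L^6}\aleq1$ coming from \eqref{ul2l6}, the nonlinear compositions $\vr_h\vuh$, $\vr_h\vuh\otimes\vuh$ and $p_h$ are equi-integrable on $(0,T)\times\Omega$, hence converge weakly in $L^1$ to the corresponding Young-measure barycentres, the remainders $\order(h^\beta)$ vanish, and one obtains \eqref{G5}, \eqref{G6} with $\nu_C\equiv0$. The compatibility condition \eqref{perpartes} follows by passing to the limit in the discrete integration-by-parts identity relating $\Dhuh$ (resp.~$\Gradedge\vth$) to the discrete gradient of $\vuh$ (resp.~$\vth$), using the weak $L^2$ convergences $\Dhuh\to\langle\Nu;\mathbf{D}_u\rangle$ and $\Gradedge\vth\to\langle\Nu;\mathbf{D}_\vt\rangle$ guaranteed by \eqref{apriori_2.2}. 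The energy inequality \eqref{G7} is read off from the discrete dissipation \eqref{d_en_ineq} by weak-(*) lower semicontinuity of the (convex) total energy, and \eqref{G11} is then trivially satisfied since $\nu_C\equiv0$.

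The entropy inequality \eqref{G10} is where the genuine difficulty lies, and I expect it to be the main obstacle. The nonnegative entropy production $\mathcal R_h+\calP_h$ (with $\mathcal R_h=\tfrac1{\vth}(2\mu|\Dhuh|^2+\lambda|\Divh\vuh|^2)+\kappa|\Gradedge\vth|^2/\vt_h^2$ and $\calP_h=D_1+D_2\ge0$, cf.~\eqref{est_entropy}) is only bounded in $L^1$, so along the subsequence it converges merely weakly-(*) to a nonnegative measure $\langle\mathcal R+\calP\rangle$ on $[0,T]\times\Omega$, which in principle could charge more mass than the barycentre $\langle\Nu;\mathcal R\rangle$. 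The key point is to check that this defect has the right sign: applying Lemma~\ref{lemma} with $F\equiv0$ and $G(\vU)=\tfrac1\vt\big(\mathbb{S}(\mathbf{D}_u):\mathbf{D}_u+\kappa|\mathbf{D}_\vt|^2/\vt\big)$ yields $0\le\langle\mathcal R\rangle-\langle\Nu;\mathcal R\rangle$ in $\mathcal M([0,T]\times\Omega)$, i.e.\ the limiting production measure dominates the Young-measure barycentre. Combining this with the (now routine) weak-$L^1$ convergence of the affine terms $\vr_h s_h$, $\vr_h s_h\vuh$ and of the heat flux $\kappa\Gradedge\vth/\vth$ (equi-integrable under \eqref{assumptions} and \eqref{grad_vt}), with the vanishing of the consistency error in \eqref{consistency_entropy}, and keeping the test function $\varphi\ge0$ so that the inequality survives the limit, we arrive at \eqref{G10}.

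Finally, since the discrete initial data are the projections $(\Pim\vr_0,\Pim\vu_0,\Pim\vt_0)$, the Young measure at $t=0$ is the Dirac mass generated by the smooth initial data, so \eqref{G5}--\eqref{G10} hold with the initial condition $\{\mathcal V_{0,x}\}_{x\in\Omega}$. Collecting all of the above shows that $\{\Nu\}_{(t,x)\in(0,T)\times\Omega}$ is a DMV solution of \eqref{ns_eqs} in the sense of Definition~\ref{def_DMV}, which is exactly the assertion of Theorem~\ref{thm_dmvs}.
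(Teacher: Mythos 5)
Your proposal is correct and follows essentially the same route as the paper: generation of the Young measure from the uniform bounds \eqref{assumptions} and the \emph{a priori} estimates, passage to the limit in the consistency formulations \eqref{cP1}--\eqref{cP2} and \eqref{consistency_entropy} with $\nu_C\equiv 0$, the compatibility condition \eqref{perpartes} from the weak $L^2$ convergence of $\Dhuh$ and $\Gradedge\vth$, and the application of Lemma~\ref{lemma} with $F\equiv 0$ and $G(\vU)=\frac{1}{\vt}\big(\mathbb S(\mathbf D_u):\mathbf D_u+\kappa|\mathbf D_\vt|^2/\vt\big)$ to control the sign of the entropy-production defect. No substantive deviation from the paper's argument.
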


\subsection{Convergence to the strong  solution}

Having shown the family of approximate solutions computed by our finite volume scheme \eqref{scheme_ns} generates the DMV solution of the limit system \eqref{ns_eqs}, we may use the DMV--strong uniqueness principle  established in \cite[Theorem 6.1]{stability_nsf} to get the following result. 

\begin{Theorem} \label{thm_uniqueness}
Let $\kappa > 0$, $\mu > 0$, and $\lambda \geq 0$ be constant. Let the thermodynamic functions $p$, $e$, and $s$ comply with the perfect gas constitutive relations
\[
p(\vr, \vt) = \vr \vt, \ e(\vr, \vt) = c_v \vt,\ s(\vr, \vt) = \log \left( \frac{\vt^{c_v}}{\vr} \right) , \ c_v > 1.
\]
Assume that $\{ \Nu \}_{(t,x) \in (0,T)\times \Omega}$ is a DMV solution of the Navier--Stokes--Fourier system \eqref{ns_eqs} in the sense of Definition~\ref{def_DMV} such that
\begin{equation} \label{C1}
\Nu \left\{ 0 < \underline{\vr} \leq \vr \leq \Ov{\vr}, \ \vt \leq \Ov{\vt}, \ |\vu| \leq \Ov{u} \right\} = 1
\ \mbox{for a.a.} \ (t,x) \in (0,T)\times \Omega
\end{equation}
for some constants $\underline{\vr}$, $\Ov{\vr}$, $\Ov{\vt}$, and $\Ov{\vu}$. Assume further that
\[
\mathcal{V}_{0,x} = \delta_{\vr_0(x), \vt_0(x), \vu_0(x)} \ \ \ \mbox{for a.a.}\ x \in \Omega,
\]
where $(\vr_0, \vt_0, \vu_0)$ belong to the regularity class
\begin{equation}
\label{init_data}
\vr_0, \vt_0 \in W^{3,2}(\Omega), \ \vr, \ \vt > 0 \ \ \mbox{in} \ \ \Omega,\  \vu_0 \in W^{3,2}_0 (\Omega; \mathbb R^3).
\end{equation}
Then
\[
\mathcal{V}_{t,x} = \delta_{ [\tvr(t,x), \tvt(t,x), \tvu(t,x), \mathbf{D}(\tvu) (t,x), \Grad \tvt(t,x)] } \ \mbox{for a.a.}\ (t,x) \in (0,T)\times \Omega,
\]
where $(\tvr, \tvt, \tvu)$ is a strong  (classical) solution to the Navier--Stokes--Fourier system with the initial data $(\vr_0, \vt_0, \vu_0)$.
\end{Theorem}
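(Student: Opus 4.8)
The statement is precisely the DMV--strong uniqueness principle of \cite[Theorem~6.1]{stability_nsf}, adapted to the space--periodic setting, and the plan is to recall the two ingredients behind it and explain why the present hypotheses suffice. First I would invoke the local well--posedness theory of Valli and Zajackowski \cite{Valli}: given initial data in the class \eqref{init_data} with $\vr_0,\vt_0>0$, there exist a time $T>0$ and a unique classical solution $(\tvr,\tvt,\tvu)$ of \eqref{ns_eqs} on $[0,T]$ which is smooth and for which $\tvr$ and $\tvt$ stay bounded away from $0$ and from $+\infty$ on $[0,T]\times\Omega$. This furnishes the ``strong'' object against which the DMV solution will be compared.

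The second --- and main --- ingredient is a relative--energy (weak--strong) comparison carried out inside the DMV class. I would introduce the ballistic free energy $H_\Theta(\vr,\vt)=\vr e(\vr,\vt)-\Theta\,\vr s(\vr,\vt)$ and the relative--energy functional
\[
\mathcal{E}(\tau)=\intO{ \big\langle \Nu;\ \tfrac12\vr|\vu-\tvu|^2+H_{\tvt}(\vr,\vt)-\partial_\vr H_{\tvt}(\tvr,\tvt)\,(\vr-\tvr)-H_{\tvt}(\tvr,\tvt) \big\rangle },
\]
with the barred quantities evaluated at the strong solution at time $\tau$. Combining the DMV identities --- the mass balance \eqref{G5} tested with $\tfrac12|\tvu|^2-\partial_\vr H_{\tvt}(\tvr,\tvt)$, the momentum balance \eqref{G6} tested with $\tvu$, the energy inequality \eqref{G7}, the entropy inequality \eqref{G10} tested with the positive function $\tvt$, the compatibility conditions \eqref{perpartes} --- with the equations satisfied by $(\tvr,\tvt,\tvu)$ and the perfect--gas relations, and reorganising, one arrives at a relative--energy inequality of the form
\[
\mathcal{E}(\tau)+\int_0^\tau\intO{ \big\langle \Nu;\ \tfrac{\tvt}{\vt}\big(\mathbb S(\mathbf D_u)-\mathbb S(\mathbf D(\tvu))\big):\big(\mathbf D_u-\mathbf D(\tvu)\big)+\tfrac{\kappa\tvt}{\vt^2}\big|\mathbf D_\vt-\Grad\tvt\big|^2 \big\rangle }\dt\ \leq\ \mathcal{E}(0)+\int_0^\tau c\,\mathcal{E}(t)\,\dt+\int_0^\tau\mathcal{D}(t)\,\dt,
\]
where $c$ depends only on the (smooth, bounded, bounded--below) strong solution. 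The concentration defect $\nu_C$ enters only through its pairing with $\Grad\tvu$, which is dominated by the dissipation defect $\mathcal{D}$ via \eqref{G11}, and $\mathcal{D}$ is in turn absorbed into $\mathcal{E}$. The uniform bounds \eqref{C1} on the support of $\Nu$, together with the positivity and smoothness of $(\tvr,\tvt,\tvu)$, are exactly what make every remainder term --- pressure, internal energy, entropy and the various cross terms --- quadratic in the differences and controllable by $c\,\mathcal{E}$. Since $\mathcal{V}_{0,x}=\delta_{(\vr_0,\vt_0,\vu_0)}$ coincides with the data of the strong solution, $\mathcal{E}(0)=0$, so Gronwall's lemma yields $\mathcal{E}(\tau)=0$ for a.a.\ $\tau\in[0,T]$. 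Strict convexity of $H_{\tvt}$ in $(\vr,\vt)$ for $\vr,\vt>0$ then forces $\Nu$ to reduce to the Dirac mass in $(\vr,\vt,\vu)$ concentrated at $(\tvr,\tvt,\tvu)$, while the marginals in $\mathbf D_u$ and $\mathbf D_\vt$ are pinned down through the compatibility relations \eqref{perpartes}; this is the asserted form of $\mathcal{V}_{t,x}$.

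The hard part is the bookkeeping that produces the relative--energy inequality: one must track the concentration defect $\nu_C$ and the dissipation defect $\mathcal{D}$ simultaneously and verify that the genuine viscous and heat--conduction dissipation generated by the comparison (the coercive integral on the left above) is strong enough to absorb the defect contribution through \eqref{G11} rather than being swamped by it; the perfect--gas law with $c_v>1$ and the bounds \eqref{C1} are what make this possible. The space--periodic setting is in fact slightly more convenient than in \cite{stability_nsf}, since the Korn--Poincar\'e compatibility condition used there is not needed, but the lower bound $\vr\geq\underline{\vr}$ from \eqref{C1} is still essential for controlling the entropy and temperature terms. As all of this is carried out in \cite[Theorem~6.1]{stability_nsf}, the conclusion follows.
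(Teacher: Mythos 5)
Your sketch of the relative--energy machinery is a reasonable account of what lies behind \cite[Theorem~6.1]{stability_nsf}, which the paper simply cites rather than reproves; to that extent your argument and the paper's overlap on the ``uniqueness'' half. But you have missed the step that is the actual content of the paper's proof: the passage from \emph{local} to \emph{global} existence of the strong solution. Valli and Zajaczkowski \cite{Valli} give a classical solution only on some maximal interval $[0,T_{\rm max})$ with possibly $T_{\rm max}<T$; you silently identify this local existence time with the time horizon $T$ of the DMV solution (``there exist a time $T>0$ and a unique classical solution on $[0,T]$''), which is not what the local theory provides. The theorem, however, asserts the identification $\mathcal{V}_{t,x}=\delta_{[\tvr,\tvt,\tvu,\mathbf{D}(\tvu),\Grad\tvt]}$ on all of $(0,T)\times\Omega$, so the strong solution must be shown to live on the whole interval.

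The paper closes this gap by a bootstrap: first apply the DMV--strong uniqueness principle on $[0,T_{\rm max})$, so that on that interval the DMV solution coincides with the strong one; the uniform bounds \eqref{C1} on the support of $\Nu$ then transfer to $(\tvr,\tvt,\tvu)$, and the conditional regularity (no--blow--up) criterion of Sun, Wang, and Zhang \cite{SunWangZhang} --- extended to the periodic setting, with the velocity bound handling general viscosity coefficients --- rules out breakdown at $T_{\rm max}$ and forces $T_{\rm max}=T$. In your write-up the hypothesis \eqref{C1} is used only to control remainder terms in the relative--energy inequality; its second, and here decisive, role as the input to a continuation criterion is absent. Without it your argument proves the conclusion only up to some unspecified $T_{\rm max}\leq T$, which is strictly weaker than the statement.
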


\begin{proof}

 Under the regularity assumption \eqref{init_data}, the strong solution exists locally in time, say on $[0, T_{\rm max})$, see, e.g., Valli and Zajackowski \cite{Valli}. Thus we can use the DMV--strong uniqueness principle on $[0, T_{\rm max})$. On the other hand, hypothesis \eqref{C1} implies that the no--blow up criterion of Sun, Wang, and Zhang \cite{SunWangZhang} applies yielding $T_{max} = T$.
As a matter of fact, the results of Sun, Wang, and Zhang \cite{SunWangZhang} have been established on a bounded domain with suitable boundary conditions. However their extension to the space--periodic case is straightforward. In particular, the assumption on the uniform bound of the velocity makes it possible to handle general viscosity coefficients (cf. Remark~3 in \cite{SunWangZhang}).
\end{proof}

In order to use the above result we additionally  need that the DMV solution has also bounded velocity, cf. \eqref{assumptions} and \eqref{C1}. Then, as a consequence of Theorem~\ref{thm_uniqueness} and  the DMV--strong uniqueness on $(0,T)\times\Omega,$  we can show that the DMV solution coincides with the global strong solution.

\begin{Theorem}[Convergence to strong solution]\label{thm_convergence}
In addition  to the hypotheses of Theorem~\ref{thm_uniqueness}, suppose that the Navier--Stokes--Fourier system (\ref{ns_eqs}) is endowed with the initial data $(\vr_0,\vt_0, \bfu_0)$ satisfying \eqref{init_data}.
Let $(\vr_h, \vt_h, \vu_h)$ be the solution of the finite volume scheme \eqref{scheme_ns} with $0 < \eps <1, $ satisfying the assumptions \eqref{assumptions} and, in addition,
\begin{equation*}
  |\vu_h (t)| \leq \Ov{u} \mbox{ uniformly for } \ h \rightarrow 0 \ \mbox{ and all } \ t \in (0,T).
\end{equation*}
Then
\begin{align*}
\vrh \rightarrow \vr \mbox{ (strongly) in } L^{p}\left((0,T)\times \Omega \right),\ &\vth \rightarrow \vt \mbox{ (strongly) in }  L^{p}\left((0,T)\times \Omega \right),\\
&\bfu_h \rightarrow \bfu \mbox{ (strongly) in } L^p\left((0,T)\times \Omega; \mathbb R^d \right),\ p \in [1,\infty),
\end{align*}
where $\vr$, $\vt$, and $\bfu$ is a strong (classical) solution of the Navier--Stokes--Fourier system.
\end{Theorem}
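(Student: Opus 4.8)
\medskip
\noindent\textbf{Proof proposal.} The plan is to assemble the three pieces already established: the discrete solutions generate a DMV solution (Theorem~\ref{thm_dmvs}); that DMV solution is in fact a Dirac mass concentrated on the unique strong solution (Theorem~\ref{thm_uniqueness}); and a Young measure that collapses to a Dirac mass forces strong convergence of the generating sequence.

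First I would fix a sequence $h \to 0$ and extract a subsequence (not relabelled) along which Theorem~\ref{thm_dmvs} applies, so that $\{\vrh,\vth,\vuh,\Dhuh,\Gradedge\vth\}_{h>0}$ generates a Young measure $\{\Nu\}_{(t,x)\in(0,T)\times\Omega}$ which is a DMV solution of \eqref{ns_eqs}. The uniform bounds \eqref{assumptions} together with the extra hypothesis $|\vuh(t)| \leq \Ov{u}$ confine $\supp \Nu$ to the fixed compact set $\mathcal{K} := \{0 < \underline{\vr} \leq \vr \leq \Ov{\vr},\ 0 < \underline{\vt} \leq \vt \leq \Ov{\vt},\ |\vu| \leq \Ov{u}\}$ for a.a. $(t,x)$, which is precisely the support hypothesis \eqref{C1} required by Theorem~\ref{thm_uniqueness}. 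It then remains to identify the initial Young measure: since by \eqref{init_data} the functions $\vr_0,\vt_0,\vu_0$ are continuous on $\Omega$ (for $d\leq 3$, $W^{3,2}\hookrightarrow C^1$), the interpolation estimate \eqref{n4c} yields $\vrh^0=\Pim\vr_0\to\vr_0$, $\vth^0=\Pim\vt_0\to\vt_0$, $\vuh^0=\Pim\vu_0\to\vu_0$ strongly in every $L^p(\Omega)$, $p<\infty$, hence $\mathcal{V}_{0,x}=\delta_{\vr_0(x),\vt_0(x),\vu_0(x)}$.

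With the hypotheses of Theorem~\ref{thm_uniqueness} verified, I would invoke it to conclude $\Nu=\delta_{[\tvr(t,x),\tvt(t,x),\tvu(t,x),\mathbf D(\tvu)(t,x),\Grad\tvt(t,x)]}$ for a.a. $(t,x)$, with $(\tvr,\tvt,\tvu)$ the strong (classical) solution, which is moreover global in time because \eqref{C1} rules out blow-up (this is already contained in the proof of Theorem~\ref{thm_uniqueness}). The last, and the only genuinely substantive, step is to upgrade the weak convergence to strong convergence. For this I would run the standard second-moment argument: choosing $G\in C_c(\mathcal F)$ that coincides with $\vr\mapsto\vr$, resp. $\vr\mapsto\vr^2$, on a neighbourhood of $\mathcal K$, the defining property of the Young measure gives $\vrh\rightharpoonup\langle\Nu;\vr\rangle=\tvr$ and $\vrh^2\rightharpoonup\langle\Nu;\vr^2\rangle=\tvr^2$ weakly-(*) in $L^\infty((0,T)\times\Omega)$; testing the first with the bounded function $\tvr$ and the second with $1$,
\[
\norm{\vrh-\tvr}_{L^2((0,T)\times\Omega)}^2=\intTO{\vrh^2}-2\intTO{\vrh\tvr}+\intTO{\tvr^2}\longrightarrow 0 .
\]
Interpolating this with the uniform $L^\infty$ bound $\underline{\vr}\leq\vrh\leq\Ov{\vr}$ gives $\vrh\to\tvr$ strongly in $L^p$ for all $p\in[1,\infty)$, and the same reasoning applied to $\vth$ and to each component of $\vuh$ (both bounded by \eqref{assumptions} and $|\vuh|\leq\Ov{u}$) yields the claimed strong convergence of temperature and velocity. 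Since the strong solution with the given data is unique, the limit does not depend on the subsequence, so the full sequence converges. The main obstacle is really just bookkeeping: truncating the non-compactly-supported observables $\vr,\vr^2,\dots$ against the \emph{a priori} bounds, and checking that the ``time slice'' of the Young measure at $t=0$ is the one generated by the discrete initial data — both routine once Theorems~\ref{thm_dmvs} and \ref{thm_uniqueness} are in hand.
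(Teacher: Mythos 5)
Your proposal is correct and follows exactly the route the paper intends: the paper leaves the proof of Theorem~\ref{thm_convergence} implicit, stating only that it is ``a consequence of Theorem~\ref{thm_uniqueness} and the DMV--strong uniqueness,'' and your argument fills in precisely those steps — verification of the support condition \eqref{C1} and of $\mathcal{V}_{0,x}=\delta_{\vr_0,\vt_0,\vu_0}$, invocation of Theorem~\ref{thm_uniqueness} to collapse the Young measure to a Dirac mass, and the standard first/second-moment argument combined with the uniform $L^\infty$ bounds to upgrade weak-(*) convergence to strong $L^p$ convergence of the full sequence. No gaps.
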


\medskip

\begin{Remark}

We have constructed solution having periodic boundary conditions.  When considering a polyhedral domain, the existence of smooth solutions remains open and may be a delicate task. To avoid this problem, one has to approximate a smooth domain by a family of polyhedral domains analogously as in \cite{FHS_16}. Clearly, such a problem does not occur for periodic boundary conditions.
\end{Remark}

\section{Conclusions}
In the present paper we have studied a long--standing open problem of rigorous convergence analysis of finite volume schemes for multidimensional compressible flows. We
have proved that the bounded numerical solutions generated by the finite volume method \eqref{scheme_ns_fv} converge to the global strong solution of the Navier--Stokes--Fourier system (\ref{ns_eqs}) describing motion of viscous compressible and heat conducting
fluids. To this goal we have applied a rather general technique using the dissipative measure--valued solutions. Indeed, realising that for the numerical solutions the conservation of mass \eqref{mass_conservation} and the discrete energy dissipation \eqref{d_en_ineq_old} hold, we have derived
the first {\sl a priori} estimates \eqref{N}. To proceed further
the discrete entropy inequality \eqref{est_entropy}  has played a fundamental role. In order to control the discrete entropy we had to assume boundedness of the discrete density and temperature, cf.~\eqref{assumptions}. This has allowed us, together with the entropy inequality, to obtain the second {\sl a priori} estimates  \eqref{apriori_2.1} and \eqref{apriori_2.2}.
Equipped with the above bounds we have shown in Section~\ref{sec_consistency} the consistency of our finite volume method.

Consequently, the numerical solutions were shown to generate, up
to a subsequence, the Young measure that represents a dissipative measure--valued solution of the Navier--Stokes--Fourier system, see Section~\ref{sec_convergence}.   Using the DMV--strong uniqueness principle, cf.~\cite[Theorem~6.1]{stability_nsf}, we have obtained the strong convergence of the finite volume solutions  towards the strong (classical) solution of the Navier--Stokes--Fourier system (\ref{ns_eqs}) on the lifespan of the latter.
Assuming moreover that the numerical solution emanating from the  initial data satisfying \eqref{init_data} has also bounded velocity, we were able to use the DMV--strong uniqueness result stated in Theorem~\ref{thm_uniqueness} 
 to show the strong convergence  to the global in time strong (classical) solution of \eqref{ns_eqs} without assuming its existence a priori, 
 cf.~Theorem~\ref{thm_convergence}.

 As far as we know this is the first rigorous convergence proof for the
finite volume method applied to the Navier--Stokes--Fourier system.  The numerical flux \eqref{num_flux} in our scheme  is based on the upwinding with an additional numerical diffusion of order $\mathcal{O}(h^{ \eps +1}),$ $0<\varepsilon<1$. In fact, the additional numerical diffusion is only a technical tool. Consequently, our result  implies the convergence of any finite volume method with a numerical diffusion larger than that of our diffusive  upwinding.

\bigskip \noindent
\textbf{Acknowledgement.} E. Feireisl, H. Mizerov\' a and B. She would like to thank DFG TRR 146 Multiscale simulation methods for soft matter systems and the Institute of Mathematics, University Mainz for the hospitality.


\end{document}